\numberwithin{equation}{section}
\setlist{nosep}
\setlist{noitemsep}
\newcommand{\R}{\mathbb{R}}
\newtheorem{theorem}{Theorem}
\newtheorem{proposition}{Proposition}[section]
\newtheorem{lemma}[proposition]{Lemma}
\newtheorem{corollary}[proposition]{Corollary}
\newtheorem{remark}[proposition]{Remark}
\theoremstyle{plain}
\theoremstyle{definition}
\newtheorem{defi}[proposition]{Definition}
\newcommand{\tref}[1]{Theorem~\ref{t.#1}}
\newcommand{\pref}[1]{Proposition~\ref{p.#1}}
\newcommand{\lref}[1]{Lemma~\ref{l.#1}}
\newcommand{\cref}[1]{Corollary~\ref{c.#1}}
\newcommand{\eref}[1]{(\ref{e.#1})}
\def \supp{\mathrm{supp }} 
\def \1{\mathbf{1}} 
\def \mcl{\mathcal}
\def \ep{\varepsilon}
\def \dist{\mathrm{dist}}
\newcommand{\g}{\mathsf{g}}
\def\({\left(}
\def\){\right)}
\def\XXint#1#2#3{{\setbox0=\hbox{$#1{#2#3}{\int}$}
		\vcenter{\hbox{$#2#3$}}\kern-.5\wd0}}
\newcommand{\fluct}{\mathrm{fluct}}
\newcommand{\Fluct}{\mathrm{Fluct}}
\newcommand{\E}{\mathbb{E}}
\renewcommand{\P}{\mathbb{P}}
\newcommand{\ov}{\overline}
\renewcommand{\tilde}{\widetilde}
\def\namedlabel#1#2{\begingroup
	#2%
	\def\@currentlabel{#2}%
	\phantomsection\label{#1}\endgroup
}
\def \d {\mathsf{d}} 
\def \s {\mathsf{s}} 
\def \c {\mathsf{c}} 
\def \cd{\mathsf{c}_{\d}} 
\def \cds{\mathsf{c}_{\d, \s}} 
\def \h{\mathsf{h}} 
\def \Zpart{\mathsf{Z}} 
\def \Kpart{\mathsf{K}} 
\def \Fenergy{\mathsf{F}} 
\def \Error {\mathrm{Err}}
\def \emp {\mathsf{emp}}  
\title{Poisson Statistics for Coulomb Gases at Intermediate Temperature Regimes}
\author{David Padilla-Garza}
\address[D. Padilla-Garza]{Einsten Institute of Mathematics, Hebrew University of Jerusalem}
\email{David.Padilla-garza@mail.huji.ac.il}
\author{Luke Peilen}
\address[L.Peilen]{Department of Mathematics, Temple University}
\email{luke.peilen@temple.edu}
\author{Eric Thoma}
\address[E.Thoma]{Department of Mathematics, Stanford University}
\email{thoma@stanford.edu}
\date{\today}
\begin{document}
	\begin{abstract}
		We consider the microscopic statistics of a Coulomb gas in $\R^2$ at intermediate temperatures. In particular, we show that the microscopic point process associated with the Coulomb gas converges to a homogeneous Poisson point process at intermediate temperature regimes $\beta N \rightarrow \infty$ and $\beta \sqrt{N} \log N \rightarrow 0$, extending previous results. Our approach relies on a novel quantitative asymptotic description of correlation functions, which is of its own interest.
	\end{abstract}
	
	\maketitle

	\section{Introduction}
	We are interested in the Coulomb gas in $\R^\d$, an interacting particle system whose equilibrium statistics are governed by the Gibbs measure at inverse temperature $\beta$
	\begin{equation}\label{e.Gibbs1}
		\P_{N,\beta}(dX_N)=\frac{1}{\Zpart_{N,\beta}}e^{-\beta \mathcal{H}_N(X_N)}~dX_N
	\end{equation}
	where $X_N=(x_1,\ldots,x_N)\subset (\R^\d)^N$ denotes the particle configuration and $\mathcal{H}_N$ is an interaction Hamiltonian given by 
	\begin{equation}\label{e.Hamiltonian}
		\mathcal{H}_N(X_N)=\frac{1}{2}\sum_{i \ne j}\g(x_i-x_j)+N\sum_{i=1}^N V(x_i).
	\end{equation}
	$V:\R^\d \rightarrow \R$ is an external confinement potential, assumed to grow rapidly enough at infinity so that the particles remain asymptotically confined to a compact subset of $\R^\d$ with overwhelming probability. The kernel $\g$ we consider is the Coulomb kernel
	\begin{equation}\label{e.Coulomb}
		\g(x)=\begin{cases}
			-\log|x| & \text{if }\d=2, \\
			|x|^{2-\d} & \text{if }\d \geq 3.
		\end{cases}
	\end{equation}
	There is also interest in studying more general Riesz gases, where the interaction power is allowed to vary between $0$ and $\d$:
	\begin{equation}\label{e.Riesz}
		\g(x):=\g_\s(x)=\begin{cases}
			|x|^{-s} & \text{if }0<\s<\d, \\
			-\log|x| & \text{if }\s=0.
		\end{cases}
	\end{equation}
	The reason for taking $\s<\d$ is to preserve local integrability of the kernel at the origin.
	In the Coulomb case $\g$ satisfies $-\Delta \g=\cd \delta_0,$ unlocking access to a trove of elliptic techniques. 
	
	For the Riesz gases, one has a similar relationship with a fractional operator, namely $(-\Delta)^{\frac{\d-\s}{2}}\g=\cds \delta_0$ in the sense of distributions. The \textit{superCoulombic} Riesz gas, for $\d-2<\s<\d$ is then particularly tractable. The fractional Laplacian with power $0<\alpha<1$ has an interpretation as a (possibly degenerate) elliptic operator in $\R^{\d+1}$ via the Caffarelli-Silvestre extension procedure (see \cite{CS07}) 
    When $\s<\d-2$ (the \textit{subCoulombic} Riesz gas) the fractional operator is extremely nonlocal and the analysis of the gas is rather different.

     In this article, we will focus on the Coulomb gas, where the superharmonicity of the kernel $\g$ plays a key role. We will work mainly in dimension $\d=2$. We will also focus on the temperature regime $\frac{1}{N} \ll \beta \ll 1$, which we call the \emph{intermediate} temperature regime. 
	

	\subsection{The Thermal Equilibrium Measure}
	From \cite{F35} (see also \cite{ST97}), if one assumes that the potential $V$ is everywhere finite, lower semicontinuous, and grows faster than the logarithm at infinity, i.e.
	\begin{equation}\label{e.potgrowth}
		\lim_{|x|\rightarrow \infty}V(x)+\g(x)=+\infty
	\end{equation}
	then the mean-field energy
	\begin{equation}\label{e.E}
		\mathcal{E}_V(\mu):=\mcl E(\mu)+\int V(x)~\mu(dx):=\frac{1}{2}\iint \g(x-y)~\mu(dx)\mu(dy)+\int V(x)~\mu(dx)
	\end{equation}
	has a 
    compactly supported minimizer among probability measures known as the \textit{equilibrium measure}, which we denote $\mu_V$.
    The equilibrium measure satisfies the Euler-Lagrange equation
    \begin{equation}
    \label{eq:ELeq}
        \begin{cases}
            \h^{\mu_{V}} + V &\geq c_{V} \quad  \text{ in }\mathbb{R}^{\d} \\
            \h^{\mu_{V}} + V &= c_{V} \quad  \text{ in }\mathrm{supp} \(\mu_{V} \)
        \end{cases}
    \end{equation}
    for some constant $c_V$, where $\h^{\nu}$ is the Coulomb potential
	\begin{equation}\label{e.potential}
		\h^{\nu}(x):=\int \g(x-y)~\nu(dy).
	\end{equation}
    The equilibrium measure is a good approximation of the \textit{empirical measure} in the sense that 
	\begin{equation*}
	 \emp_N:=\frac{1}{N}\sum_{i=1}^N \delta_{x_i}\rightarrow \mu_V
	\end{equation*}
	in a large deviations sense so long as $V$ is lower semicontinuous and $\beta \gg \frac{1}{N}$ (see for instance \cite{GZ19}). At temperatures $\beta \simeq \frac{1}{N}$ the gas undergoes thermalization, and this equilibrium measure is no longer an adequate approximation. Entropic effects appear at leading order, and particle locations are no longer strongly confined to a compact set; instead, one needs to consider a \textit{thermal equilibrium measure} $\mu_\theta$ minimizing
	\begin{equation}\label{e. thermeq}
		\mcl E_\theta(\mu):= \mcl E_V(\mu)+\frac{1}{\theta}\int \mu \log \mu
	\end{equation}
	with $\theta:=\beta N$. This minimizer exists (see \cite{Lam21}) and satisfies the Euler-Lagrange equation
	\begin{equation}\label{e. EL}
		\h^{\mu_\theta}+V+\frac{1}{\theta}\log \mu_\theta=c_\theta
	\end{equation}
	for some constant $c_\theta$. Furthermore, one has a law of large numbers (see \cite{GZ19}) for the empirical measures
	\begin{equation}\label{eq: LLN}
		\emp_N \rightarrow \mu_\theta
	\end{equation}
	in a large deviations sense when $\beta N \to \theta$ as $N \to \infty$. Indeed, the authors in \cite{AS21} argue that $\mu_\theta$ is a more precise approximation of $\mu_V$ at all temperature regimes.  Concentration of measure is studied in \cite{PG23} and quantiative approximation of $\mu_V$ by $\mu_\theta$ at temperatures $\beta \gg \frac{1}{N}$ is established in \cite{AS22} via connections between the equilibrium problem and the classical obstacle problem (see \cite{S24} and \cite{C98}).
		
	\subsection{Main results}
	Our main goal in this paper is to better understand the microscopic behavior of the gas. A useful means of examining this behavior is by way of the \textit{microscopic point process} near a point $\ov z \in \R^\d$, which is defined by  	\begin{equation}\label{e.LPP}
		Q_{\ov z, N} = \sum_{i=1}^N \delta_{N^{1/\d}(x_i - \ov z)}. 
	\end{equation}
	Effectively, $Q_{\overline{z},N}$ centers the configuration around $\ov z$ and then blows up the distance between particles so that the typical interparticle distance (previously $\simeq N^{-1/\d}$) is now order one. We will study the convergence as $N \to \infty$ of $Q_{\overline{z},N}$ in the weak topology on point processes, Definition \ref{d. weak conv}, 
    generated by integration against continuous, compactly supported functions on $\R^\d$.
		
	We will consider $\beta = \beta_N$ as a sequence dependent on $N$ and let $\theta_\ast = \inf_N N \beta_N$. Throughout the paper, we will assume that $\d = 2$, $\theta_\ast > 0$, and
	\begin{equation} \label{e.conf} \tag{Conf} \begin{aligned}
		 &\int_{|x|\geq 1}e^{-\frac{\theta_\ast}{2}(V(x)+\g(x))}~dx+\int_{|x|\geq 1}e^{-\theta_\ast(V(x)+\g(x))}|x|\log^2|x|~dx<+\infty,  \\ 
		 &V \in C^2 \quad \text{and} \quad \Delta V \geq \c>0 \quad \text{in a neighborhood of } \supp(\mu_V), \\ 
		 &\lim_{|x| \to \infty} V(x) - \log |x| = +\infty.
	\end{aligned}\end{equation}

	For convenience, we will sometimes assume stronger growth of $V$ at $\infty$ and an upper bound for the Laplacian of $V$, namely
	\begin{equation} \label{e.growth} \tag{Grow}
            \begin{aligned}
		&V(x) \geq |x|^{\frac{2}{99}} \quad \forall |x| \geq C, \\
            &\sup_{x \in \R^2} \Delta V(x) \leq C.
        \end{aligned}
	\end{equation}
	   Both assumptions may be weakened at the cost of more involved error terms. 
We will also, for our main results but not every intermediate result, assume that the temperature is sufficiently high. Precisely, we assume
	\begin{equation} \label{e.temp.zero} \tag{Temp}
		\beta_N = o\left (\frac{1}{N^{1/2} \log N}\right ) \quad \text{as} \quad N \to \infty.
	\end{equation}

	We allow implicit constants throughout to depend on the parameters within \eref{conf}, \eref{growth}, and \eref{temp.zero}. We will also implicitly assume $N$ is large relative to these parameters.
	
	\begin{theorem}\label{t.1}
		Assume \eref{temp.zero} and \eref{growth}, and let $\ov z\in\mathrm{int}(\supp(\mu_V))$. Then the local point process $Q_{\ov z, N}$ under $\P_{N,\beta}$ converges weakly to a Poisson point process with intensity $\mu_V(\ov z)\mathrm{Leb}$ for the Lebesgue measure $\mathrm{Leb}$ on $\R^{\d}$.
	\end{theorem}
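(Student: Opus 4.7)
The strategy is to prove convergence of all $k$-point correlation functions of $Q_{\bar z, N}$ to the constant value $\mu_V(\bar z)^k$, combined with tightness; together these identify the limit as the claimed Poisson point process. By definition, the $k$-point intensity of $Q_{\bar z, N}$ at $y_1, \ldots, y_k \in \R^2$ equals $N^{-k} \rho_k^N(\bar z + N^{-1/2} y_1, \ldots, \bar z + N^{-1/2} y_k)$, where $\rho_k^N$ is the $k$-point correlation function under $\P_{N,\beta}$, so everything reduces to showing that this rescaled correlation function converges to $\mu_V(\bar z)^k$ locally uniformly in the $y_i$, for each fixed $k$.

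Using the split Gibbs representation (\ref{e.Gibbs}), I would write
\[
\rho_k^N(x_1, \ldots, x_k) = \frac{N!}{(N-k)!} \, \mu_\theta(x_1) \cdots \mu_\theta(x_k) \cdot \frac{\Kpart_{N,\beta}^{(k)}(x_1,\ldots,x_k)}{\Kpart_{N,\beta}},
\]
where $\Kpart_{N,\beta}^{(k)}(x_1,\ldots,x_k)$ denotes the partition function of the $N-k$ remaining particles sampled from $\mu_\theta^{\otimes (N-k)}$ with $x_1, \ldots, x_k$ frozen. Convergence to $\mu_V(\bar z)^k$ then splits into (i) local uniform convergence $\mu_\theta(\bar z + N^{-1/2} y) \to \mu_V(\bar z)$, which follows from the quantitative asymptotics of $\mu_\theta$ (cf. \cite{AS22}, \cite{PG23}) together with the regularity of $\mu_V$ in the interior of its support; and (ii) the quantitative statement that the conditional-to-unconditional partition function ratio above tends to $1$, uniformly for $x_i$ at microscopic separations near $\bar z$.

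The main obstacle is (ii). To address it I would expand $\Fenergy_N$ around the frozen configuration, producing cross terms consisting of the mutual logarithmic interactions $\sum_{i<j\leq k}\g(x_i-x_j)$ among the frozen charges (finite and deterministic at the relevant microscopic separations) together with fluctuation sums of the form $\frac{1}{N}\sum_{j>k}\h^{\delta_{x_i}-\mu_\theta}(x_j)$ representing the field generated by the discrepancy between each frozen charge and its mean-field contribution; here the Euler--Lagrange equation (\ref{e. EL}) is used to absorb the $V$- and $\log\mu_\theta$-pieces into constants and boundary terms. The core task is then to bound the exponential moment of $\beta N^2$ times these cross terms under the Gibbs measure. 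The potential $\h^{\delta_{x_i}-\mu_\theta}$ has size $\log N$ at microscopic scale, contributing $\beta N \log N$ in the exponent, so the hypothesis (\ref{e.temp.zero}) $\beta\sqrt{N}\log N \to 0$ is precisely what forces these exponential moments to equal $1+o(1)$. The necessary concentration of the empirical field against $\mu_\theta$ constitutes the ``novel quantitative asymptotic description of correlation functions'' advertised in the abstract; combined with the lower bound of \pref{Kpartlower}, it closes the argument. Tightness is then automatic, since (ii) also yields uniform control of the one-point intensity on compact sets in rescaled coordinates.
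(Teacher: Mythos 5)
Your high-level plan — reduce to $k$-point correlation functions, write $\rho_k^N$ as a ratio of a frozen-particle partition function to $\Kpart_{N,\beta}$, expand $\Fenergy_N$ to isolate cross terms, and control exponential moments of those cross terms — matches the paper's route through Lemma~\ref{l.kpoint}, Corollary~\ref{c.exph}, and Proposition~\ref{p. corrfunctions}, and the appeal to the Euler--Lagrange equation and to \pref{Kpartlower} is also consistent.

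However, there is a genuine gap at the pivotal quantitative step. You state that the cross potential $\h^{\delta_{x_i}-\mu_\theta}$ is of size $\log N$ at microscopic scale and thus ``contributes $\beta N\log N$ in the exponent,'' and then claim that hypothesis \eref{temp.zero}, which is $\beta N^{1/2}\log N\to 0$, forces the exponential moment to be $1+o(1)$. These two statements are incompatible as written: if the exponent were genuinely of size $\beta N\log N$, one would need the strictly stronger assumption $\beta N\log N\to 0$, i.e.\ $\beta\ll N^{-1}/\log N$, which excludes the entire intermediate regime the theorem is about. The missing mechanism is precisely the concentration estimate Proposition~\ref{p. concentration}: the quantity entering the exponent is $\h^{\fluct_{N-k}}(x_i)$, the potential of the fluctuation field evaluated at a fixed point, and the content of that proposition is that this is of size $N^{-1/2}\log N$ with high probability under $\P_{N,\beta}$ (not of the naive pointwise size $\log N$), because the empirical field is $N^{-1/2}$-close to $\mu_\theta$ in the appropriate sense. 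Multiplying by $\beta N$ then gives $\beta N^{1/2}\log N$, which is exactly what \eref{temp.zero} controls. Establishing this concentration is the technical heart of the paper: it requires regularizing the logarithmic kernel (Lemma~\ref{l.regularization}), a Fourier-side lower bound on the Coulomb energy of measures with a prescribed potential value at a point (Lemma~\ref{l.minE}), and isotropic averaging to compare the regularized and unregularized potentials (Proposition~\ref{p.reg.h.change}). Your proposal acknowledges that ``concentration of the empirical field against $\mu_\theta$'' is needed, but without pinning down the $N^{-1/2}$ gain your arithmetic does not close, and the argument as written does not reach the advertised threshold $\beta\ll N^{-1/2}/\log N$.

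A smaller omission: the expansion of $\Fenergy_N$ also produces a linear-statistics cross term $\beta k(N-k)\int \h^{\mu_\theta}\,d\fluct_{N-k}$ (the interaction of the frozen charges' mean-field image with the fluctuation of the remaining gas). Corollary~\ref{c.exph} is stated with a test function $F$ in $L^2(\P_{N,\beta})$, and one must separately show that the exponential of this term has bounded second moment; in the paper this is Proposition~\ref{p.lower bound}, proved via the local-law / linear-statistics estimates of \cite{S24} applied to a truncated $\h^{\mu_\theta}$. Your sketch of absorbing everything into $\zeta_V$ via the Euler--Lagrange equation gestures in this direction but does not give the required two-sided $L^2$ control.
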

	We remind the reader of the definition of the Poisson point process and weak convergence of point processes in Section \ref{sect:convergence}. Assumption \eref{conf} allows us to access the confinement bound
	\begin{equation}\label{e.strong confinement}
		\P(\exists ~|x_i|>L) \leq CN \int_{|x|>L}e^{-\beta N\zeta(x)}~dx
	\end{equation}
	to obtain sufficient localization, where $\zeta = \h^{\mu_V} + V \geq V - \log |x| - C$, as established in \cite[Theorem 3]{T25}. Our assumptions on $V$ also guarantee that we can make use of the quantitative approximation of $\mu_V$ by $\mu_\theta$, established in \cite[Theorem 1]{AS22}, which in particular implies
	\begin{equation}\label{e. inttemp}
		\lim_{\theta \rightarrow \infty}\mu_\theta (x)=\mu_V(x)
	\end{equation}
	for $x \in \mathrm{int}(\supp(\mu_V))$. 
    
Let us briefly contextualize the setup for Theorem \ref{t.1}. In general, the process (\ref{e.LPP}) is hard to study due to the long-range nature of the Coulomb energy. The authors in \cite{LS15} studied the push-forward of $Q_{\overline{z},N}$ under an averaging operator and identified a critical temperature scale, $\beta \simeq N^{\frac{2}{\d}-1}$ $(\beta \simeq N^{-\frac{\s}{\d}}$ for Riesz interactions with kernels $\g_\s:=|x|^{-\s}$) at which the averaged point process converges as $N \rightarrow \infty$ to a point process that minimizes a free energy with competing energy and entropic terms. For the $\d=1$ log-gas this process is the celebrated $\mathrm{Sine}_\beta$ point process, whose convergence has been well-studied (cf. \cite{VV09}, \cite{KS09}). For the Coulomb gas in $\d=2$, this corresponds to the Ginibre point process at $\beta=2$, but is otherwise not identified. At regimes $\beta \gg N^{\frac{2}{\d}-1}$ one would correspondingly expect the energy term to dominate and see particles concentrate around energy minimizers, and as $\beta \downarrow 0$ faster than $N^{\frac{2}{\d}-1}$ one would expect to see entropic behavior dominate. This corresponds to a kind of complete independence of the particles, and convergence to a Poisson point process (see $\S 3$ for a more thorough discussion). Indeed, this regime was studied in \cite{L16}, where the author established convergence of this averaged point process to a Poisson point process. 
	
	In the high temperature regime $\beta \simeq \frac{1}{N}$ where the strength of the interaction is weaker and particles are no longer confined to a compact region in $\R^\d$, the analysis is a bit more tractable and the corresponding entropic domination has been well-studied. In particular, the authors in \cite{Lam21} have shown convergence of the local point process $Q_{\ov z, N}$ to a Poisson point process for a very general collection of interacting particle systems that include Coulomb and Riesz gases, generalizing work for the $\beta$-ensembles initiated in \cite{BGP15} and \cite{NT20}.
	
	The behavior of the local point process at intermediate temperature regimes $N^{-1}\ll \beta \ll N^{-\frac{\s}{\d}}$ for the Riesz gas is largely unknown, outside of the case of the Gaussian $\beta$-ensembles (the $1-\d$ log gas with quadratic confinement potential) studied in \cite{BGP15}. Theorem \ref{t.1} begins to fill in this gap in the literature for the $\d=2$ Coulomb gas.

    \begin{remark}
    Assumptions (\ref{e.growth}) may be weakened with some added technical difficulty. It is particularly simple, for example, to weaken \eref{growth} to $V(x) \geq |x|^{\gamma}$ for some $\gamma > 0$ for large $|x|$ at the sole cost of changing certain implicit constants $C$. It is also clear that \tref{1} remains true after modifying the potential far from the origin so long as $\beta \geq C^{-1} N \log N$, since with high probability all particles lie in a fixed compact set and one can therefore modify the potential outside this set without significantly changing microscopic statistics.
    \end{remark}
	
	We establish Theorem \ref{t.1} by a quantitative comparison of the correlation functions for $Q_{\overline{z},N}$ to those of a Poisson point process of intensity $\mu_V(\overline{z})$; see Proposition \ref{p. corrfunctions} below. This in particular allows us to estimate various microscopic statistics of $\P_{N,\beta}$ at finite $N$.
	\begin{corollary}
		Assume \eref{temp.zero}. Let $x \in \mathrm{int}(\supp(\mu_V))$ and let $\rho_1(x)dx$ denote the first marginal $\P_{N,\beta}(x_1 \in dx)$. Then for any $\gamma > 0$, we have
		\begin{equation}\label{e.onepoint}
			\rho_1(x)=\mu_\theta(x)\left(1+O\left(\beta N^{\frac{1 + \gamma}{2}}\right)\right)
		\end{equation}
		as $N \to \infty$. 
		In particular, applying \eref{onepoint} and assuming in addition that $\beta \leq N^{-\frac{1}{2}-\alpha}$ for some $\alpha>0$ and $N$ large enough, we have for any open set $U$ containing $\supp(\mu_V)$ that
		\begin{equation}\label{e.confinement result}
			\P_{N,\beta}(\exists ~x_i \notin U) \leq CN\mu_\theta(U^c) \leq Ce^{-c\beta N \min \left(\dist(U^c, \supp(\mu_V))^2, 1 \right)},
		\end{equation}
		for some $c > 0$. 
	\end{corollary}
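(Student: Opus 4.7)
The plan is to obtain \eref{onepoint} as the one-point specialization of \pref{corrfunctions} and to combine \eref{onepoint} with the confinement estimate \eref{strong confinement} to deduce \eref{confinement result}.

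For \eref{onepoint}: the marginal density $\rho_1(\overline{z})$ coincides with the value at the origin of the first correlation function of the microscopic point process $Q_{\overline{z}, N}$. Specializing \pref{corrfunctions} at this single point yields a comparison with the constant intensity $\mu_V(\overline{z})$ of the limiting Poisson point process, with multiplicative error $O(\beta N^{(1+\gamma)/2})$. Combining with the quantitative approximation $\mu_V(\overline{z}) = \mu_\theta(\overline{z})(1 + o(1))$ on the interior from \cite[Theorem 1]{AS22}, whose error can be made negligible compared to $\beta N^{(1+\gamma)/2}$ by slightly enlarging $\gamma$, gives \eref{onepoint} after dividing by $\mu_\theta(\overline{z})$, which is bounded below on the interior by \eref{inttemp}.

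For the first inequality in \eref{confinement result}, exchangeability of the $x_i$ and a union bound reduce the claim to showing $\int_{U^c}\rho_1 \leq C \mu_\theta(U^c)$. Since $U^c$ is disjoint from $\supp(\mu_V)$, \eref{onepoint} does not apply directly; instead, I would use the localized (pointwise) form of the confinement estimate \eref{strong confinement} from \cite[Theorem 3]{T25} to obtain $\rho_1(x) \lesssim e^{-\beta N \zeta(x)}$ on the exterior of $\supp(\mu_V)$. The Euler-Lagrange equation \eref{EL} together with the uniform approximation $\h^{\mu_\theta} \approx \h^{\mu_V}$ from \cite[Theorem 1]{AS22} identifies $\mu_\theta(x)$ with a multiple of $e^{-\beta N \zeta(x)}$ on the exterior, yielding the pointwise bound $\rho_1 \lesssim \mu_\theta$ on $U^c$.

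For the exponential bound, I would apply a standard Laplace-type estimate. Under \eref{conf}, the effective potential $\zeta - c_V$ vanishes on $\supp(\mu_V)$, has zero normal derivative on $\partial \supp(\mu_V)$ by the $C^{1,1}$ regularity of the obstacle problem, and satisfies $\Delta \zeta = \Delta V \geq \c > 0$ just outside the support, so $\zeta(x) - c_V \gtrsim \dist(x, \supp(\mu_V))^2$ near $\partial \supp(\mu_V)$; at infinity, \eref{growth} forces $\zeta$ to grow like $V$. Integrating $\mu_\theta(x) \lesssim e^{-\theta(\zeta(x) - c_V)}$ over $U^c$ and absorbing the factor of $N$ into the exponential, using $\beta N \to \infty$ together with $\beta \leq N^{-1/2 - \alpha}$ (and choosing $\gamma < 2\alpha$ so that the multiplicative error in \eref{onepoint} is $o(1)$), produces the stated bound. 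The main technical point is upgrading \eref{strong confinement} from an integrated estimate to a pointwise bound on general sets $U^c$; this should follow from a straightforward localization of the argument in \cite{T25}.
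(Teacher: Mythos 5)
Your plan for \eref{onepoint} misreads \pref{corrfunctions}: that proposition compares $R_k$ directly to $\prod_i \mu_\theta(x_i)$ (times the energy prefactor $e^{-\beta k^2\Fenergy_k}$), not to the limiting intensity $\mu_V(\overline z)^k$. Taking $k=1$ gives $\rho_1(x)=\mu_\theta(x)\,e^{-\beta\Fenergy_1(x,\mu_\theta)}\bigl(1+O(\beta N^{1/2}\log N)\bigr)$, and since $\Fenergy_1(x,\mu_\theta)=-\h^{\mu_\theta}(x)+\mcl E(\mu_\theta)=O(1)$ for bounded $x$, the prefactor is $1+O(\beta)$ and \eref{onepoint} follows after absorbing $\log N$ into $N^{\gamma/2}$. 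No detour through $\mu_V$ is needed, and the detour you propose is not clearly closable: the \cite{AS22} error between $\mu_\theta$ and $\mu_V$ is of size $O(\log\theta/\theta)=O(\log(\beta N)/(\beta N))$, which is not $O(\beta N^{(1+\gamma)/2})$ uniformly throughout the allowed temperature range (e.g.\ when $\beta N$ is only polylogarithmic in $N$), so ``slightly enlarging $\gamma$'' does not repair it.

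For the first inequality of \eref{confinement result}, you correctly observe that \eref{onepoint} as stated applies only on $\mathrm{int}(\supp\mu_V)$, but the resolution is internal to the paper rather than a localization of \cite{T25}. The proposition \pref{corrfunctions} is stated and proved for arbitrary $\overline z\in\R^2$, and the prefactor $e^{-\beta\Fenergy_1(x,\mu_\theta)}=e^{\beta\h^{\mu_\theta}(x)-\beta\mcl E(\mu_\theta)}$ is bounded above uniformly in $x$ (since $\h^{\mu_\theta}$ is bounded above). Hence $\rho_1(x)\leq C\mu_\theta(x)$ holds globally once $\beta N^{1/2}\log N=o(1)$, and exchangeability plus a union bound then gives $\P_{N,\beta}(\exists\,x_i\notin U)\leq N\int_{U^c}\rho_1\leq CN\mu_\theta(U^c)$. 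Your route through a pointwise version of \eref{strong confinement} both introduces an unproven ingredient (as you note) and contradicts the paper's explicit intent of deriving \eref{confinement result} from \eref{onepoint} independently of \cite{T25}. Your Laplace-type argument for the second inequality, using the Euler--Lagrange structure, the quadratic detachment of $\zeta$ at $\partial\supp\mu_V$, and the growth of $V$ at infinity, is consistent with the paper's reference to \cite[Lemma 3.5]{AS22}.
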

	We note that \eref{confinement result} is proved for general $\beta$ and in a form effective on microscopic scales in \cite{T25}. Our proof is however essentially independent of \cite{T25} (we only use \eref{strong confinement} for $L = N^{99}$ and alternative bounds would suffice). The second inequality in \eref{confinement result} is a consequence of our assumptions on the potential and estimates on $\mu_\theta$, see for instance (\cite[Lemma 3.5]{AS22}). 

	A key technical input is a novel concentration estimate for the potential generated by fluctuations
	\begin{equation}\label{e.fluct}
		\fluct_N:=\emp_N-\mu_\theta
	\end{equation}
	which may be of interest in its own right.
	
	\begin{proposition}\label{p. concentration}
		Assume $\beta \leq 1$ and \eref{growth}.
		We have
		\begin{equation} 
			\P_{N,\beta}\( \left| \sum_{i=1}^k \h^{\fluct}(y_i)\right| \geq k TN^{-1/2} \) \leq 4k\(e^{-\frac12 TN^{1/2}} + e^{-\beta TN}\),
		\end{equation}
		for any $T \geq C \log N$ for a large enough $C > 0$, uniformly in points $y_1,\ldots,y_k \in \R^2$. Here $N$ is restricted to be large uniformly over selections of $y_1,\ldots,y_k$ in any fixed compact set.
	\end{proposition}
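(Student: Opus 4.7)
My first move is a reduction to a single-point concentration bound. By the triangle inequality and pigeonhole, the event $\{|\sum_{i=1}^k \h^{\fluct}(y_i)| \geq k T N^{-1/2}\}$ implies that $|\h^{\fluct}(y_j)| \geq T N^{-1/2}$ for some $j$. A union bound over $j$ then reduces the statement to
\begin{equation*}
\P_{N,\beta}\left(|\h^{\fluct}(y)| \geq T N^{-1/2}\right) \leq 4\left(e^{-T N^{1/2}/2} + e^{-\beta T N}\right),
\end{equation*}
uniformly over $y$ in a compact set. The two tails correspond to different mechanisms: the first handles close-particle events and the second controls typical fluctuations via a smoothed version.

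Fix a mollification scale $\eta := e^{-T N^{1/2}/2}$ and let $\delta_y^{(\eta)} := |B(y,\eta)|^{-1} \mathbf{1}_{B(y,\eta)}$ denote the smoothed point mass at $y$. The close-particle event $\mcl A^c := \{\min_i |x_i - y| < \eta\}$ has probability at most $C N \mu_\theta(B(y,\eta)) \lesssim N \eta^2 \leq e^{-T N^{1/2}/2}$ for $T \geq C \log N$, using the rough one-point density bound $\rho_1 \lesssim \mu_\theta$ (following from \pref{Kpartlower} and bulk boundedness of $\mu_\theta$). On $\mcl A$, the smoothed fluctuation $\h^{\fluct,\eta}(y) := \int \h^{\delta_y^{(\eta)}} d\fluct_N$ differs from $\h^{\fluct}(y)$ by a deterministic error $O(\eta^2 \log(1/\eta))$ (from the $\mu_\theta$-integral of the truncated singularity), which is much smaller than $TN^{-1/2}$. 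This accounts for the first tail and reduces the problem to showing $\P(|\h^{\fluct,\eta}(y)| \geq T N^{-1/2}/2) \leq e^{-\beta T N}$.

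For this, I apply Markov's inequality on the exponential moment $\E_{\P_{N,\beta}}[e^{\pm \lambda N \h^{\fluct,\eta}(y)}]$, evaluated via the classical charge-addition identity. Setting $\alpha := \lambda/(\beta N)$ and $\tilde\mu_\eta := \mu_\theta + \alpha \delta_y^{(\eta)}$, bilinear expansion of $\Fenergy_N(\emp_N, \tilde\mu_\eta)$ yields
\begin{equation*}
\lambda N \h^{\fluct,\eta}(y) - \beta N^2 \Fenergy_N(\emp_N, \mu_\theta) = -\beta N^2 \Fenergy_N(\emp_N, \tilde\mu_\eta) + \frac{\lambda^2}{2\beta} \mcl E\bigl(\delta_y^{(\eta)}\bigr),
\end{equation*}
with $\mcl E(\delta_y^{(\eta)}) := \iint \g \, d(\delta_y^{(\eta)})^{\otimes 2} = O(\log(1/\eta))$ the logarithmic self-energy. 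Integrating against $\mu_\theta^{\otimes N}/\Kpart_{N,\beta}$ gives the identity
\begin{equation*}
\E_{\P_{N,\beta}}\bigl[e^{\lambda N \h^{\fluct,\eta}(y)}\bigr] = \exp\!\left(\frac{\lambda^2}{2\beta}\mcl E\bigl(\delta_y^{(\eta)}\bigr)\right)\frac{\tilde\Kpart_{N,\beta}(\tilde\mu_\eta)}{\Kpart_{N,\beta}},
\end{equation*}
where $\tilde\Kpart_{N,\beta}(\tilde\mu_\eta) := \int e^{-\beta N^2 \Fenergy_N(X_N,\tilde\mu_\eta)}\mu_\theta^{\otimes N}(dX_N)$. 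The main obstacle is the control of this partition function ratio: since $\tilde\mu_\eta$ is a signed perturbation of $\mu_\theta$ with excess mass $\alpha$, \pref{Kpartlower} does not directly apply to $\tilde\Kpart_{N,\beta}(\tilde\mu_\eta)$, and the identity is circular in isolation. I plan to supply the missing input via either a direct deterministic lower bound on $\Fenergy_N(X_N, \tilde\mu_\eta)$ of Sandier--Serfaty type (exploiting that $\tilde\mu_\eta$ and $\mu_\theta$ differ by a small localized perturbation), or a Cauchy--Schwarz decomposition that reduces the ratio to symmetric exponential moments accessible via \pref{Kpartlower} at modified temperature. Optimizing $\lambda$ against the Gaussian quadratic $\lambda^2 \log(1/\eta)/\beta$ versus the linear Markov exponent $\lambda T N^{1/2}$ then yields the $e^{-\beta T N}$ tail, with the lower tail handled symmetrically by $-\lambda$.
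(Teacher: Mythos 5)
Your proposal takes a genuinely different route from the paper. The paper does not compute exponential moments of $\h^{\fluct,\eta}(y)$ via the charge-addition identity you write; instead it argues directly on the event. Lemma~\ref{l.minE} shows, by a Fourier-side Cauchy--Schwarz, that $|\h^{\fluct\ast\phi_\eta}(0)|\geq\ep$ forces $\mcl E(\fluct\ast\phi_\eta)\gtrsim\ep^2/\log\eta^{-1}$; Lemma~\ref{l.regularization} converts this into a lower bound on $\Fenergy_N(X_N,\mu_\theta)$ with a renormalization error of size $\g(\eta)/N$; and then Markov together with $\Kpart_{N,\beta}\geq 1$ gives the $e^{-\beta TN}$ tail. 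The near-field tail is handled not by a volume bound on $\{\min_i|x_i-y|<\eta\}$ but by the exponential-moment smearing estimate \pref{reg.h.change}, proved via isotropic averaging, which directly yields $\P(\h^{\fluct}(0)-\h^{\fluct\ast\psi_{2\eta}}(0)\geq\ep)\leq 2e^{-\frac12 TN^{1/2}}$.

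Beyond the partition-function gap you already flag, there are two further problems. First, the claim $\rho_1\lesssim\mu_\theta$ does not follow from \pref{Kpartlower} and boundedness of $\mu_\theta$. From the formula \eref{marginal}, $\Kpart_{N,\beta}\geq 1$ only gives $\rho_1(x)\leq\mu_\theta(x)\int e^{-\beta N^2\Fenergy_N}\mu_\theta^{\otimes(N-1)}$, and the remaining integral is not controlled by $\Kpart_{N,\beta}$; a bound of this shape is essentially a by-product of the proposition you are proving (cf.\ \eref{onepoint}). The a priori ingredient that does yield it is the isotropic-averaging estimate \eref{iso.upper}, which you would need to invoke. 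Second, and more seriously, your choices of $\eta$ and $\lambda$ are mutually incompatible. The near-field bound forces $N\eta^2\leq e^{-TN^{1/2}/2}$, hence $\log\eta^{-1}\gtrsim TN^{1/2}$. But in the Markov optimization the supremum over $\lambda$ of $\tfrac12\lambda TN^{1/2}-\tfrac{1}{2\beta}\lambda^2\log\eta^{-1}$ equals $\beta T^2 N/(8\log\eta^{-1})$; with $\log\eta^{-1}\sim TN^{1/2}$ this is only $\sim\beta TN^{1/2}$, short of the target $\beta TN$ by a factor of $N^{1/2}$. Forcing the optimization to reach $\beta TN$ requires $\log\eta^{-1}\lesssim T$, which then destroys the near-field bound for $T\gg\log N$. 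The paper escapes this tension precisely by decoupling the two mechanisms: the isotropic-averaging smearing bound controls the near-field at the merely polynomial scale $\eta=N^{-100}$, while the energy lower bound with $\log\eta^{-1}\sim\log N$ provides the $e^{-\beta TN}$ tail. Your charge-addition identity is itself correct and is a natural alternative strategy, but without an independent near-field input it cannot close at the claimed rates.
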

	
	\subsection{Comparison with Literature}
	As discussed above, a seminal study of the microscopic behavior of Coulomb and Riesz gases was undertaken by the authors in \cite{LS15}, where they studied a push-forward of $Q_{\overline{z},N}$ under a certain averaging operator. In that work, they identify a critical temperature scale $\beta \simeq N^{-\frac{\s}{\d}}$ 
    for Riesz gases at which the local behavior of the gas minimizes a competition between a renormalized energy term and an entropic term, phrased as a relative entropy of point processes with respect to a Poisson point process. In the $\s=0$ case, corresponding to the logarithmic interactions, this corresponds to constant temperature. For the one-dimensional log gas, this minimizer is unique (\cite{EHL21}) and is well known to be the $\mathrm{Sine}_\beta$ point process (cf \cite{VV09} and \cite{KS09}) in the bulk and the $\mathrm{Airy}_\beta$ point process \cite{RRV11} at the edge. In the two-dimensional case with $V(x)=|x|^2$ and $\beta=2$, this is the Ginibre point process.
	
	The regime $\beta \simeq N^{-1}$, so-called \textit{high temperature regime}, has seen a lot of attention in recent years. In particular, it can be seen as a crossover between the strong interaction we see at constant $\beta$ and the independent $\beta=0$ behavior (see \cite{ABG12} and \cite{AB19}) and we see convergence of the empirical measures $\emp_N$ to a thermal equilibrium measure $\mu_\theta$ discussed above. The behavior of $Q_{\overline{z},N}$ at this temperature regime has been well-studied in the one-dimensional logarithmic case (\cite{BGP15} and \cite{NT20}) and for general Riesz interactions in \cite{Lam21}. Given the above discussion, one expects entropic effects to dominate for any $\beta \downarrow 0$ and to recover convergence to a Poisson point process (see \cite{AD14}); for the Gaussian $\beta$-ensembles, this was studied in \cite{BGP15}.

    Recently, in \cite{T25}, weak convergence of $Q_{\overline z,N}$ to a homogeneous {\it mixed} Poisson process was proved so long as $\beta_N \to 0$ as $N \to \infty$ at any speed (modulo a subsequence). The argument uses isotropic averaging (to prove that the $k$-point correlation functions are asymptotically subharmonic in any dimension) followed by Liouville's theorem (to show that the $k$-point correlation functions are therefore constant at $N = \infty$ in $\d=2$). Identifying the limiting process as a Poisson process of explicit intensity, rather than a mixed Poisson process, appears to be beyond the reach of the ``soft" argument based on Liouville's theorem.
	
	Our approach is motivated by ideas from \cite{BGP15}, \cite{NT20} and \cite{Lam21}, where the authors consider convergence of the correlation functions of the point process. We are able to extend the temperature regime for the Coulomb gas to $\beta \ll N^{-\frac{1}{2}-\alpha}$ for $\alpha>0$ by a quantitative description of this convergence, as we undertook for weakly interacting particle systems in \cite{PGPT24}. A key technical input is a concentration estimate for the Coulomb potential generated by fluctuations of linear statistics, Proposition \ref{p.concentration}. This is a challenging estimate to obtain for the Coulomb gas due to the singular nature of the logarithm as a test function. Our approach relies on techniques for studying concentration developed in \cite{CHM18} and \cite{GZPG24}, coupled with a regularization of the kernel. 
	
	Fluctuations of linear statistics are interesting in their own right, and have seen a lot of attention both for the one-dimensional log gas (\cite{J98}, \cite{BG13}, \cite{BG16}, \cite{BL18}, \cite{BLS18}, \cite{Lam21b}, \cite{P24} and others) and the two-dimensional Coulomb gas (\cite{RV07}, \cite{LS18}, \cite{BBNY19} and \cite{S22}). Results for singular test functions such as the logarithm are more sparse, but have been studied in \cite{BMP22} for the one-dimensional log gas and \cite{B21} for the Riesz gas on the periodic torus. A key technique that allows us to go from regularized statistics to the logarithm are overcrowding estimates recently developed for general $\beta$ in \cite{T25} . These rely on the powerful tool of isotropic averaging, developed in \cite{T24} and \cite{L17}, which are specific to the Coulomb gas.
	
	The study of fluctuations of linear statistics for the logarithm are interesting, not just for studying the local point process of the gas, but also for the field $\sqrt{\beta}\h^{\fluct}$ itself, which is expected to behave like a log-correlated field. In the case of the $\beta$-ensembles and random normal matrices, this corresponds to the log-characteristic polynomial of the matrix ensemble. The maximum of log-correlated fields is well studied (\cite{DRZ17}), and one expects
	\begin{equation*}
		\max \sqrt{\beta}\h^\fluct =\log N-\frac{3}{4}\log \log N+Z_N
	\end{equation*}
	as $N \rightarrow \infty$, where $Z_N$ is a shifted Gumbel. This conjecture for the log-characteristic polynomial is the Fyodorov-Hiary-Keating conjecture (\cite{FHK12}). Progress has recently been made on understanding the first-order asymptotics for the Coulomb gas in \cite{LLZ24} and \cite{peilen2025maximum} and the one-dimensional log gas in \cite{BLZ23}, but the remaining asymptotics are largely open.
	
	\section{Concentration for Fluctuations of the Logarithmic Potential}
    \subsection{Splitting and the Next-Order Energy}
    	Given the law of large numbers (\ref{eq: LLN}), it is natural to ``split" off the deterministic first-order limit when considering finer statistics of the gas. In particular, one has the so-called ``splitting formula" (see \cite[Lemma 5.2]{S24}, introduced in \cite{SS12})
	\begin{equation}\label{e.splitting}
		\mcl H_N(X_N)=N^2\mcl E_\theta(\mu_\theta)-\frac{N}{\theta}\sum_{i=1}^N \log \mu_\theta(x_i)+N^2\Fenergy_N(X_N,\mu_\theta),
	\end{equation}
	where $\Fenergy_N$ is a next-order energy given by 
	\begin{equation}\label{e.Fenergy}
		\Fenergy_N(X_N,\mu) = \frac{1}{2}\iint_{\Delta^c} \g(x-y) \(\frac{1}{N} \sum_{i=1}^N \delta_{x_i} - \mu\)^{\otimes 2}(dx,dy),
	\end{equation}
	which can be thought of as the energy of a ``jellium" consisting of positive point charges and a neutralizing, negatively charged background measure $\mu$. The set $\Delta = \{x=y\} \subset (\R^\d)^2$ above denotes the diagonal, which is removed to neglect the formally infinite self-energy of a point charge. Using (\ref{e.splitting}), one can equivalently write the Gibbs measure (\ref{e.Gibbs1}) as 
	\begin{equation}\label{e.Gibbs}
		\P_{N,\beta}(dX_N) = \frac{1}{\Kpart_{N,\beta}} e^{-\beta N^2 \Fenergy_N(X_N,\mu_\theta)} \mu_{\theta}^{\otimes N}(dX_N)
	\end{equation}	
	where $\theta = \beta N$, and $\Kpart_{N,\beta}$ is the next-order partition function
	\begin{equation}\label{e.Kpart}
		\Kpart_{N,\beta}(X_N):=\int e^{-\beta N^2 \Fenergy_N(X_N,\mu_\theta)} \mu_{\theta}^{\otimes N}(dX_N).
	\end{equation}
	We take (\ref{e.Gibbs}) as our preferred representation of the Coulomb gas in the remainder of this paper. The next-order partition function is conveniently bounded below, which we state here for reference.
	\begin{proposition}(\cite[Prop. 5.10]{PG23})\label{p.Kpartlower}
		Let $\Kpart_{N,\beta}$ be as in (\ref{e.Kpart}). Then $\Kpart_{N,\beta} \geq 1$.
	\end{proposition}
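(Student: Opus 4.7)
The natural approach is Jensen's inequality: since $\mu_\theta^{\otimes N}$ is a probability measure and $t \mapsto e^{-t}$ is convex,
\begin{equation*}
\Kpart_{N,\beta} = \int e^{-\beta N^2 \Fenergy_N(X_N, \mu_\theta)}\, d\mu_\theta^{\otimes N}(X_N) \;\geq\; \exp\Bigl(-\beta N^2 \int \Fenergy_N(X_N, \mu_\theta)\, d\mu_\theta^{\otimes N}(X_N)\Bigr).
\end{equation*}
The plan is to evaluate this expected modulated energy, show it is nonpositive, and conclude.

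Expanding the square in \eref{Fenergy} and using that $\mu_\theta$ is absolutely continuous---so that diagonal restrictions on the mixed term $\nu_N\otimes\mu_\theta$ and on $\mu_\theta^{\otimes 2}$ are inoperative---gives
\begin{equation*}
\Fenergy_N(X_N, \mu_\theta) = \frac{1}{2N^2}\sum_{i\neq j}\g(x_i - x_j) - \frac{1}{N}\sum_i \h^{\mu_\theta}(x_i) + \mcl E(\mu_\theta).
\end{equation*}
Integrating against $\mu_\theta^{\otimes N}$ and using $\iint \g\, d\mu_\theta^{\otimes 2} = 2\mcl E(\mu_\theta)$ (so iid pairs contribute $\tfrac{N-1}{N}\mcl E(\mu_\theta)$ to the first term) together with $\int \h^{\mu_\theta}\,d\mu_\theta = 2\mcl E(\mu_\theta)$, a short arithmetic cancellation yields
\begin{equation*}
\int \Fenergy_N(X_N, \mu_\theta)\, d\mu_\theta^{\otimes N} = \frac{N-1}{N}\mcl E(\mu_\theta) - 2\mcl E(\mu_\theta) + \mcl E(\mu_\theta) = -\frac{1}{N}\mcl E(\mu_\theta).
\end{equation*}
Substituting back, Jensen gives $\Kpart_{N,\beta} \geq \exp(\beta N\, \mcl E(\mu_\theta))$.

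The main obstacle is the final sign: to conclude $\Kpart_{N,\beta} \geq 1$, one needs $\mcl E(\mu_\theta) \geq 0$. For $\d \geq 3$ this is automatic from positivity of $\g$; in the paper's setting $\d = 2$ the logarithmic kernel can be negative and extra work is required. I would exploit the Euler-Lagrange equation \eref{EL} for $\mu_\theta$: substituting $\h^{\mu_\theta} = c_\theta - V - \theta^{-1}\log\mu_\theta$ into $2\mcl E(\mu_\theta) = \int \h^{\mu_\theta}\,d\mu_\theta$ produces the relation $\mcl E(\mu_\theta) = c_\theta - \mcl E_\theta(\mu_\theta)$, reducing the task to a lower bound on the Lagrange multiplier $c_\theta$. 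This in turn should be accessible from the confinement condition \eref{potgrowth} and the variational minimality of $\mu_\theta$ tested against a suitable trial measure (for instance, a rescaling of $\mueq$ normalized so that its support lies inside the unit ball, where the logarithmic energy is manifestly nonnegative).
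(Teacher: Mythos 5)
Your Jensen computation is correct: with $\mu_\theta^{\otimes N}$ as the reference measure, the expected modulated energy is
\begin{equation*}
\int \Fenergy_N(X_N,\mu_\theta)\,\mu_\theta^{\otimes N}(dX_N) = \frac{N-1}{N}\mcl{E}(\mu_\theta) - 2\mcl{E}(\mu_\theta) + \mcl{E}(\mu_\theta) = -\frac{1}{N}\mcl{E}(\mu_\theta),
\end{equation*}
so Jensen yields $\Kpart_{N,\beta} \geq e^{\beta N\mcl{E}(\mu_\theta)}$, and you correctly recognize that closing the argument needs $\mcl{E}(\mu_\theta)\geq 0$. The problem is that this last inequality is genuinely false under the paper's standing assumptions, so your proposed fix cannot succeed. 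Take $V(x)=c|x|^2$ for a small constant $c>0$: then $\mu_V$ is the uniform probability measure on the disk of radius $(2c)^{-1/2}$, and a direct radial computation gives $\mcl{E}(\mu_V)=\tfrac{1}{8}+\tfrac{1}{4}\log(2c)$, which is strictly negative for $c<(2\sqrt{e})^{-1}$. This $V$ satisfies \eref{conf} and \eref{growth}, and since $\mu_\theta\to\mu_V$ as $\theta\to\infty$ one also has $\mcl{E}(\mu_\theta)<0$ for $\theta$ large. The Jensen lower bound is then genuinely below $1$, so it cannot by itself establish the proposition.

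The obstruction is structural: in $\d=2$ the logarithmic energy is not scale invariant (under the dilation $\mu\mapsto\lambda^{-\d}\mu(\cdot/\lambda)$ one has $\mcl{E}\mapsto\mcl{E}-\tfrac12\log\lambda$), so $\mcl{E}(\mu_\theta)$ can be driven to $-\infty$ by flattening $V$, while $\Kpart_{N,\beta}\geq1$ is supposed to hold uniformly. Your suggested route through the Euler--Lagrange equation does not escape this: the identity $\mcl{E}(\mu_\theta)=c_\theta-\mcl{E}_\theta(\mu_\theta)$ is exact, so showing $c_\theta\geq\mcl{E}_\theta(\mu_\theta)$ is literally equivalent to showing $\mcl{E}(\mu_\theta)\geq 0$ and the argument is circular; and testing variational minimality of $\mcl{E}_\theta$ against a trial measure supported in the unit ball only gives an \emph{upper} bound on $\mcl{E}_\theta(\mu_\theta)$, which points the wrong way. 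The cited proof in \cite{PG23} must therefore rest on a different mechanism (e.g.\ a normalization absorbed into the kernel or the potential, or an argument sharper than straight Jensen against $\mu_\theta^{\otimes N}$) rather than on the nonnegativity of $\mcl{E}(\mu_\theta)$.
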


\subsection{Method of Proof}
The remainder of this section is devoted to the proof of Proposition \ref{p. concentration}. The approach is similar in spirit to \cite{PGPT24}, with some technical modifications due to the lack of integrability of the Fourier transform of the kernel $\g$. To circumvent this issue, we smear our point charges by replacing point masses $\delta_{x_i}$ with smoothed versions $\delta_{x_i}^\eta$, uniform measures on the boundary of the ball of radius $\eta>0$ centered at $x_i$. This regularization procedure, first introduced for the Coulomb gas in \cite{SS15-2} (see \cite[Chapter 4]{S24} for a discussion) has been used extensively in the study of Coulomb gases to regularize the kernel $\g$.
	
	Our approach is slightly different than previous works, in that we will regularize both the point charges and the background measure $\mu_\theta$. More specifically, letting $\phi_\eta$ denote the uniform measure on the boundary of the ball of radius $\eta$ centered at zero, we will consider 
	\begin{equation*}
		\fluct \ast \phi_\eta:=\left(\emp_N-\mu_\theta\right)\ast \phi_\eta
	\end{equation*}
	instead of just the point regularization $\emp_N \ast \phi_\eta -\mu_\theta$. This modification allows us to argue use Fourier transform representations to argue that $\mcl E(\fluct \ast \phi_\eta)$ is large whenever $\h^{\fluct \ast \phi_\eta}$ is large at a fixed point, where $\mcl E$ is as in (\ref{e.E}). We will argue as well that $\mcl E(\fluct \ast \phi_\eta)$ is a near lower bound for for $\Fenergy_N(X_N,\mu_\theta)$, up to certain ``renormalization" errors, which will then allow us to obtain concentration for the smoothed potential $\h^{\fluct \ast \phi_\eta}$ via estimates inspired by those in \cite{CHM18}. Finally, we show that $\h^{\fluct \ast \phi_\eta}$ is pointwise close to $\h^{\fluct}$ in exponential moments so long as $\eta$ is sufficiently small using isotropic averaging techniques.
	
	\subsection{Lower bounds for the energy}
    The key to this concentration estimate is that measures with a large Coulomb potential at a given point have a correspondingly large Coulomb energy. We will use in our argument that the Fourier transform of the surface probability measure on the $(\d-1)$-sphere is explicitly computable, namely 
	\begin{equation}\label{e.FT}
		\hat{\phi}_1(\xi)=\|\xi\|^{1-\frac{\d}{2}}J_{\frac{\d}{2}-1}(\|\xi\|)
	\end{equation}
	where $J_a$ is a Bessel function of the first kind (see \cite[p. 154]{SW71}). Importantly, these functions are bounded for real arguments, and have asymptotic behavior (\cite[p. 364]{AS74})
	\begin{equation*}
		|J_a(r)| \lesssim \frac{1}{\sqrt{r}}
	\end{equation*}
	as $r \rightarrow \infty$.
	\begin{lemma} \label{l.minE}
		Let $\phi_{\eta}$ denote the uniform measure on the boundary of the ball of radius $\eta$ centered at zero with total mass $1$. For any $\ep, \delta, \eta > 0$, let $ \mcl D_{\ep,\delta,\eta}$
        denote the following set of (signed) measures:
		\begin{equation*}
		\mcl D_{\ep,\delta,\eta}:=\left\{\nu \in L^\infty(\R^2) \ : \ \nu(\R^2) = 0, \int |x| |\nu|(dx) \leq \delta^{-1}, |\h^{\nu \ast \phi_\eta}(0)| \geq \ep\right\}.
		\end{equation*}
		Then   		
        \begin{equation} 
			\inf_{\nu \in \mcl D_{\ep,\delta, \eta}} \mcl E(\nu) \geq \frac{ \c_0 \ep^2}{1 + \log \ep^{-1} + \log \delta^{-1} + \log \eta^{-1}}
		\end{equation} for some $\c_0 > 0$.
	\end{lemma}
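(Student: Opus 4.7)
The plan is to pass to Fourier space and play the low- and high-frequency behaviors of $\hat\nu$ off one another, using that $\widehat\g(\xi) = 2\pi/|\xi|^2$ in dimension $2$ to relate $\mcl E(\nu)$ and $\h^{\nu*\phi_\eta}(0)$. First, since $\nu$ is a signed measure with $\nu(\R^2) = 0$ and $\int |x|\,|\nu|(dx)\leq\delta^{-1}$, its Fourier transform $\hat\nu$ is continuous with $\hat\nu(0)=0$, and differentiation under the integral gives $\|\nabla\hat\nu\|_\infty\leq\delta^{-1}$, hence the key pointwise bound
\begin{equation*}
|\hat\nu(\xi)| \leq \delta^{-1}|\xi| \quad \text{for all } \xi \in \R^2.
\end{equation*}
Since the $\delta_0$-part of $\widehat\g$ does not contribute against $\hat\nu$ or $\hat\nu\hat\phi_\eta$ (as $\hat\nu(0)=0$), Plancherel yields, for some positive constant $c_*$,
\begin{equation*}
\mcl E(\nu) = c_* \int \frac{|\hat\nu(\xi)|^2}{|\xi|^2}\,d\xi, \qquad \h^{\nu*\phi_\eta}(0) = c_* \int \frac{\overline{\hat\nu(\xi)}\,\hat\phi_\eta(\xi)}{|\xi|^2}\,d\xi,
\end{equation*}
where by \eref{FT} and scaling $\hat\phi_\eta(\xi) = J_0(\eta|\xi|)$, uniformly bounded by $1$ and by $C(\eta|\xi|)^{-1/2}$ for $\eta|\xi|\geq 1$.

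I would then split the integral defining $\h^{\nu*\phi_\eta}(0)$ at a radius $R\in(0,1/\eta]$ to be chosen. On $\{|\xi|\leq R\}$, the pointwise bounds $|\hat\nu(\xi)|\leq|\xi|/\delta$ and $|\hat\phi_\eta|\leq 1$ give a contribution bounded by $C R/\delta$. On $\{|\xi|>R\}$, Cauchy--Schwarz gives at most $C\sqrt{\mcl E(\nu)}$ times the square root of $\int_{|\xi|>R} |\hat\phi_\eta(\xi)|^2/|\xi|^2\,d\xi$, which upon splitting at $|\xi|=1/\eta$ and using the two bounds on $\hat\phi_\eta$ is itself bounded by $C(1+\log\tfrac{1}{\eta R})$ (the inner annulus $R<|\xi|<1/\eta$ contributes the logarithm, the outer region $|\xi|>1/\eta$ contributes $O(1)$). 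Combining with $|\h^{\nu*\phi_\eta}(0)|\geq\ep$ yields the master inequality
\begin{equation*}
\ep \leq C\frac{R}{\delta} + C\sqrt{\mcl E(\nu)}\,\sqrt{1+\log(1/(\eta R))}.
\end{equation*}

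To conclude I would optimize in $R$. Choosing $R=c\ep\delta$ with $c$ small makes the first term $\leq\ep/2$; this is permissible as long as $c\ep\delta\leq 1/\eta$, and rearranging gives $\mcl E(\nu)\geq c_0\ep^2/(1+\log\ep^{-1}+\log\delta^{-1}+\log\eta^{-1})$, as claimed. In the complementary regime $c\ep\delta>1/\eta$, taking instead $R=1/\eta$ makes the low-frequency piece at most $C/(\eta\delta)\leq c\ep$ and the logarithm vanish, yielding the strictly stronger bound $\mcl E(\nu)\geq c_0\ep^2$.

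The main delicacy, rather than a single hard step, is to justify the Fourier identities despite the non-integrability of $\g$; this is handled by the facts that $\hat\nu(0)=0$ and $|\hat\nu(\xi)|\leq\delta^{-1}|\xi|$ tame the $|\xi|^{-2}$ singularity at the origin, while the $L^2$ membership of $\hat\nu$ (which follows from $\nu\in L^\infty\cap L^1$, the latter via $\|\nu\|_\infty$ on $|x|\leq 1$ and the first moment bound on $|x|>1$) together with the decay of $\hat\phi_\eta$ secure absolute convergence at infinity. Everything else is essentially bookkeeping of constants in the two regimes.
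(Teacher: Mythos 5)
Your proposal is correct and takes essentially the same route as the paper: Fourier representations of $\mcl E(\nu)$ and $\h^{\nu*\phi_\eta}(0)$, the pointwise bound $|\hat\nu(\xi)|\lesssim\delta^{-1}|\xi|$ from the vanishing total mass and the first-moment control to tame the low frequencies, and Cauchy--Schwarz against $\int \hat\g|\hat\phi_\eta|^2$ on the high frequencies with the Bessel decay of $\hat\phi_\eta$ handling $|\xi|>\eta^{-1}$. The only cosmetic difference is that the paper first rescales to $\ep=1$ (replacing $\delta$ by $\ep\delta$) and then fixes the cutoff $r=C^{-1}\delta$, whereas you keep $\ep$ and optimize over the cutoff $R$, arriving at $R=c\ep\delta$; these are the same choice in the original variables, and your explicit treatment of the degenerate case $\ep\delta>1/\eta$ is a minor but harmless addition.
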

	\begin{proof}
		Suppose $\nu \in \mcl D_{\ep,\delta, \eta}$. Since $\nu/\ep \in \mcl D_{1,\ep \delta, \eta}$ and $\mcl E(\nu/\ep) = \ep^{-2}\mcl E(\nu)$, we may assume without loss of generality that $\ep = 1$. Note that we have the representations 
		\begin{equation}
        \begin{split}
			\h^{\nu \ast \phi_\eta}(0) &= \int \hat{\g}(\xi) \hat \nu(\xi) \hat \phi_\eta(\xi) d\xi, \\
			\mcl E(\nu) &= \frac12 \int \hat{\g}(\xi) |\hat \nu(\xi)|^2 d\xi,
            \end{split}
		\end{equation}
		where the above should be understood at first in a principle value sense. We would like to argue via Cauchy-Schwarz that
		\begin{equation*}
		1 \leq |\h^{\nu \ast \phi_\eta}(0)| \leq \(\int \hat \g(\xi) |\hat \phi_\eta(\xi)|^2 \)^{1/2} \sqrt{2 \mcl E(\nu)},
		\end{equation*}
		except the integral on the RHS is infinite due to the non-integrability of $\hat \g$ at $0$. So, we will first apply a cutoff at the scale $r = C^{-1} \delta$ for a sufficiently large constant $C$. Since $\nu(\R^2) = 0$ and $\nu$ has a first moment bounded by $\delta^{-1}$, we can estimate
		\begin{equation*}
			|\hat \nu(\xi)|= C\left|\int \left(e^{-2\pi i x \cdot \xi}-1\right)\nu(x)~dx\right|    \leq C|\xi|\int |x||\nu(x)|~dx \leq C\delta^{-1}|\xi|,
		\end{equation*}
	and therefore, for $B_r$ a ball of radius $r>0$ centered at zero, we have
		\begin{equation} \label{e.lowfreq}
		\left| \int_{B_r} \hat \g (\xi)\hat \nu(\xi) \hat \phi_{\eta}(\xi) d\xi \right | \leq C \delta^{-1} \int_{B_r} |\xi| |\hat \g(\xi)||\hat \phi_\eta( \xi)|d\xi \leq C\delta^{-1} \int_{B_r} \frac{1}{|\xi|}d\xi \leq C \delta^{-1} r \leq \frac12.
	\end{equation}
	Here we used that $|\hat \phi_\eta| \leq 1$ and $\hat \g(\xi) = \frac{1}{2\pi |\xi|^2}$.
		
		Away from this truncation we can proceed as before. By Cauchy-Schwarz, we may bound
		$$
		\left| \int_{B_r^c} \hat \g (\xi)\hat \nu(\xi) \hat \phi_{\eta}(\xi) d\xi \right | \leq \(\int_{B_r^c} \hat \g (\xi)  |\hat \phi_{\eta}(\xi)|^2 d\xi \)^{1/2} \sqrt{2 \mcl E(\nu)}.
		$$
		
		Looking at the integral on the RHS, one has for the intermediate frequencies that
		\begin{equation*}
			\int_{B_{\eta^{-1}} \cap B_r^c} \hat \g(\xi) |\hat \phi_\eta(\xi)|^2 d\xi \leq C\int_{B_{\eta^{-1}}\cap B_r^c}  \frac{1}{|\xi|^{2}} d \xi \leq C\left( 1 + \log \eta^{-1} + \log \delta^{-1}\right),
		\end{equation*}
		and for the high frequencies one has (via the substitution $u = \eta \xi$) 
		\begin{align*}
			\int_{B_{\eta^{-1}}^c} \hat \g(\xi) |\hat \phi_\eta(\xi)|^2 d\xi &\leq C\int_{B_{\eta^{-1}}^c} \frac{|\hat \phi_1(\eta \xi)|^2}{|\xi|^2} d\xi \\
			&= C  \int_{B_{1}^c} \frac{|\hat \phi_1(u)|^2}{|u|^2} du \\
            &\leq C \int_{B_1^c}\frac{1}{|u|^{3}}~du\\
            &\leq C,
		\end{align*}
		making use of (\ref{e.FT}) and the corresponding Bessel function asymptotics. 	Combining with \eref{lowfreq}, we have proved
		$$
		1 \leq \left| \int_{\R^2} \hat \g (\xi)\hat \nu(\xi) \hat \phi_{\eta}(\xi) d\xi \right | \leq C(1 + \log \eta^{-1} + \log \delta^{-1})^{1/2}\sqrt{\mcl E(\nu)}+ \frac{1}{2}.
		$$
		The lemma follows by rearranging the above inequality. 
	\end{proof}

\begin{remark}
	One can prove that in $\d \geq 2$ we have
	\begin{equation} 
		\inf_{\nu \in \mcl D_{\ep,\delta, \eta}} \mcl E(\nu) \geq \frac{ \c_0 \ep^2\eta^{\d-2}}{1 + \log \ep^{-1} + \log \delta^{-1} + \log \eta^{-1}}.
	\end{equation}
	Notice that the RHS degenerates more rapidly for $\eta \downarrow 0$ in $\d \geq 3$ than in $\d=2$. This is the main reason that \tref{1} is restricted to $\d=2$. 
\end{remark}
	
	\subsection{Approximation}
	This energy $\mathcal{E}$ can be used to obtain lower bounds for the next-order energy $\Fenergy_N$ via the following renormalization bound. We consider general $\d \geq 2$ since the proof is identical.
	\begin{lemma} \label{l.regularization}
		Let $\fluct = \emp_N - \mu_\theta$, and let $\phi_\eta$ be the uniform probability measure on the boundary of the ball 
        of radius $\eta$. Then,
		\begin{equation}
			\mcl E((\emp_N - \mu_\theta) \ast \phi_\eta) - \Fenergy(X_N, \mu_\theta) \leq \frac{\g(\eta)}{2N} +C \left(\| \mu_\theta \|_{L^\infty(\R^\d)}+\| \mu_\theta \|_{L^\infty(\R^\d)}^2\right) \eta^2.
		\end{equation}
	\end{lemma}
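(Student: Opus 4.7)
The plan is to expand both $\mcl E((\emp_N - \mu_\theta) \ast \phi_\eta)$ and $\Fenergy_N(X_N, \mu_\theta)$ as integrals against a common smoothed kernel, identify the $\g(\eta)/(2N)$ contribution as a diagonal self-energy, and control the remaining difference via the superharmonicity of $\g$ together with a scaling estimate for the integral of $\g - \g \ast \phi_\eta \ast \phi_\eta$.

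Set $\g_{2\eta} := \g \ast \phi_\eta \ast \phi_\eta$ and $\nu := \emp_N - \mu_\theta$. Since $\phi_\eta$ is symmetric, Fubini gives $\mcl E(\nu \ast \phi_\eta) = \frac{1}{2}\iint \g_{2\eta}(x-y)\nu(dx)\nu(dy)$. Subtracting the expansion of $\Fenergy_N$ from \eref{Fenergy} and collecting terms, the diagonal $i=j$ of the particle-particle sum (present in $\mcl E(\nu \ast \phi_\eta)$ but excluded from $\Fenergy_N$) contributes $\frac{1}{2N^2}\sum_i \g_{2\eta}(0)$, while all other off-diagonal and mixed terms appear as $\g_{2\eta} - \g$ integrated against positive measures. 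The mean value property applied to $\g$ away from the origin gives $(\g \ast \phi_\eta)(x) = -\log\max(|x|, \eta)$, so $\g_{2\eta}(0) = -\log \eta = \g(\eta)$, accounting exactly for the $\g(\eta)/(2N)$ term. Moreover, since $-\Delta \g = 2\pi \delta_0 \geq 0$ in $\d=2$, $\g$ is superharmonic, hence $\g_{2\eta} \leq \g$ pointwise, with equality outside $B_{2\eta}$. Therefore the off-diagonal particle-particle sum and the $\mu_\theta^{\otimes 2}$ term, both carrying a non-positive factor $\g_{2\eta} - \g$, can be discarded for the upper bound, and the only remaining positive contribution is the cross term $\frac{1}{N}\sum_i \int(\g - \g_{2\eta})(x_i - y)\mu_\theta(dy)$, bounded above by $\|\mu_\theta\|_{L^\infty} \int_{B_{2\eta}}(\g - \g_{2\eta})(z)\, dz$.

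The main obstacle is to show this last integral is $O(\eta^2)$, rather than the crude $O(\eta^2 |\log\eta|)$ that follows from pointwise bounds $|\g|, |\g_{2\eta}| \lesssim |\log\eta|$ on $B_{2\eta}$. I plan to exploit the scaling of the logarithm: using $\g(\eta w) = \g(w) - \log \eta$ inside both convolutions, the additive constants cancel to give $(\g - \g_{2\eta})(\eta w) = (\g - \g \ast \phi_1 \ast \phi_1)(w)$, a fixed compactly supported integrable function of $w$. The change of variables $z = \eta w$ then yields $\int(\g - \g_{2\eta})(z)\, dz = \eta^2 \int (\g - \g \ast \phi_1 \ast \phi_1)(w)\, dw = C\eta^2$, which combined with the preceding bounds gives the claim (with constant proportional to $\|\mu_\theta\|_{L^\infty}$, and in particular the stronger version of the stated inequality in which $\|\mu_\theta\|_{L^\infty}^2$ is dropped).
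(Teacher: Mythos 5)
Your proposal is correct, and it proves the lemma by a genuinely cleaner (if closely related) route. The paper proceeds in two steps: first comparing $\mcl E(\emp_N\ast\phi_\eta - \mu_\theta)$ with $\Fenergy_N(X_N,\mu_\theta)$, and then separately comparing $\mcl E((\emp_N-\mu_\theta)\ast\phi_\eta)$ with $\mcl E(\emp_N\ast\phi_\eta-\mu_\theta)$, estimating the error $\|\g-\phi_\eta\ast\g\|_{L^1}\lesssim\eta^2$ in each step. You instead expand everything at once against the doubly-smoothed kernel $\g_{2\eta}=\g\ast\phi_\eta\ast\phi_\eta$, writing
\begin{equation*}
\mcl E(\nu\ast\phi_\eta)-\Fenergy_N(X_N,\mu_\theta)=\frac{1}{2N}\g_{2\eta}(0)+\frac{1}{2}\iint_{\Delta^c}(\g_{2\eta}-\g)(x-y)\,\nu^{\otimes 2}(dx,dy),
\end{equation*}
and then discarding the $i\neq j$ and $\mu_\theta^{\otimes 2}$ pieces by the sign of $\g_{2\eta}-\g\leq 0$ (superharmonicity). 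What this buys is (i) a single sign argument absorbs both nonpositive terms rather than having to treat $\mcl E(\mu_\theta-\phi_\eta\ast\mu_\theta)$ as a separate (nonnegative) quantity in Step 2, and (ii) a slightly sharper constant, $C\|\mu_\theta\|_{L^\infty}\eta^2$ rather than $C(\|\mu_\theta\|_{L^\infty}+\|\mu_\theta\|_{L^\infty}^2)\eta^2$. Your scaling computation $(\g-\g_{2\eta})(\eta w)=(\g-\g\ast\phi_1\ast\phi_1)(w)$, giving $\|\g-\g_{2\eta}\|_{L^1}=C\eta^2$, is the exact analogue of the $\|\g-\phi_\eta\ast\g\|_{L^1}\lesssim\eta^2$ estimate (\cite[Lemma 4.9]{S24}) used in the paper, and the identification $\g_{2\eta}(0)=-\log\eta=\g(\eta)$ via $(\g\ast\phi_\eta)(x)=-\log\max(|x|,\eta)$ is the same computation referenced to \cite[\S 4.1]{S24}. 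All the sign checks and the Fubini step rely only on the radial symmetry of $\phi_\eta$ and the superharmonicity of $\g$, which are available; the argument is complete.
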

	\begin{proof}
		We will bound the two quantities
		$$
		\mcl E(\emp_N \ast \phi_\eta - \mu_\theta) - \Fenergy(X_N, \mu_\theta) \quad \text{and} \quad \mcl E((\emp_N - \mu_\theta) \ast \phi_\eta) -	\mcl E(\emp_N \ast \phi_\eta - \mu_\theta) 
		$$
		separately.

		\textbf{Step 1.} Estimate for $\mcl E(\emp_N \ast \phi_\eta - \mu_\theta) - \Fenergy(X_N, \mu_\theta)$.

            First, we compute
		\begin{align*}
			&\mcl E(\phi_\eta \ast \emp_N - \mu_\theta) - \Fenergy_N(X_N,\mu_\theta) \\
            = &\iint \g(x - y) (\emp_N - \phi_\eta \ast \emp_N)(dx) \mu_\theta(dy)\\
            &\ \ \ \ \ \ +\frac{1}{2N^2}\sum_{i=1}^N \iint \g(x-y)~\left(\phi_\eta \ast \delta_{x_i}\right)^{\otimes 2}(dx,dy)\\
             &\ \ \ \ \ \ +\frac{1}{2N^2}\sum_{i \ne j}\left(\iint \g(x-y)~\left(\phi_\eta \ast \delta_{x_i}\right)(dx)\left(\phi_\eta \ast \delta_{x_j}\right)(dy)-\g(x_i-x_j)\right)
		\end{align*}
		and standard superharmonicity arguments tell us that the last term is negative and the middle term is explicitly computable (see \cite[\S 4.1]{S24}), yielding
		$$
		\mcl E(\phi_\eta \ast \emp_N - \mu_\theta) - \Fenergy_N(X_N,\mu_\theta) \leq \frac{1}{2N} \g(\eta) + \iint \g(x - y) (\emp_N - \phi_\eta \ast \emp_N)(dx) \mu_\theta(dy).
		$$
		By associativity of convolution we have
		\begin{equation*} 
			\iint \g(x - y) (\emp_N - \phi_\eta \ast \emp_N)(dx) \mu_\theta(dy) = \iint (\g - \phi_\eta \ast \g)(x - y) \emp_N(dx) \mu_\theta(dy). 
		\end{equation*}
		By Young's convolution inequality,
		\begin{equation}\label{e.reg.background}
        \begin{split}
			\iint (\g - \phi_\eta \ast \g)(x - y) \emp_N(dx) \mu_\theta(dy) &\leq \|(\g-\phi_\eta \ast \g)\ast \mu_\theta\|_{L^\infty(\R^\d)}\\
			&\leq \|\g-\phi_\eta \ast \g\|_{L^1(\R^\d)}\|\mu_\theta\|_{L^\infty(\R^\d)} \\
            &\leq C\|\mu_\theta\|_{L^\infty(\R^\d)}\eta^2
            \end{split}
		\end{equation}
		where we have used that $\g-\g \ast \phi_\eta$ has $L^1$ norm of order $\eta^2$ (\cite[Lemma 4.9]{S24}).

        \textbf{Step 2.} Estimate for $\mcl E((\emp_N - \mu_\theta) \ast \phi_\eta) - \mcl E(\emp_N \ast \phi_\eta - \mu_\theta)$.

		We start by writing
		\begin{align*}
			&\mcl E((\emp_N - \mu_\theta) \ast \phi_\eta) - \mcl E(\emp_N \ast \phi_\eta - \mu_\theta) 
			\\
			= &\iint \g(x-y) (\phi_\eta \ast \emp_N- \mu_\theta)(dx)(\mu_\theta - \phi_\eta \ast \mu_\theta)(dy) + \mcl E(\mu_\theta - \phi_\eta \ast \mu_\theta).
		\end{align*}
		The first term on the right hand side is equal by associativity of convolution to 
		$$
		\iint (\g - \phi_\eta \ast \g)(x-y) (\phi_\eta \ast \emp_N - \mu_\theta)(dx) \mu_\theta(dy)
		$$
		which is bounded by $C \eta^2 \| \mu_\theta \|_{L^\infty}^2$ in the same way as \eref{reg.background}. The second term on the right hand side can also be written as 
		\begin{align*}
			&\lefteqn{\frac{1}{2}\iint \g(x-y) \left(\mu_\theta - \mu_\theta \ast \phi_\eta\right)(dy)\left(\mu_\theta - \mu_\theta \ast \phi_\eta\right)(dx)} \quad & \\  
			= & \frac{1}{2}\iint (\g-\phi_\eta \ast \g)(x-y)\mu_\theta(dy)\left(\mu_\theta - \mu_\theta \ast \phi_\eta\right)(dx),
		\end{align*}
		which is similarly bounded by $C \eta^2 \| \mu_\theta \|_{L^\infty}^2$, establishing the result.
	\end{proof}
	
	Before we turn to establishing the desired concentration estimate, we first need to show that the potential generated by smeared points (which is amenable to the energy estimates discussed in Lemma \ref{l.minE}) is sufficiently close to the potential generated by the actual point configuration at a fixed point $x$. This difference is large only if there is a point very close to $x$, which is a low probability event, but the estimate requires some care due to the logarithmic singularity in the potential. We make use of isotropic averaging techniques introduced and developed in \cite{L17}, \cite{T24} and \cite{T25}.
	
	We let $\psi : \R^2 \to \R$ be a nonnegative, rotationally symmetric mollifier of total mass $1$ supported within $\overline{B_1(0)}$. We let $\psi_\eta(x) = \eta^{-2}\psi(x/\eta)$. In practice, we will set $\psi = \phi_{1/2} \ast \phi_{1/2}$ (and so $\psi_{2\eta} = \phi_{\eta} \ast \phi_{\eta}$).
	\begin{proposition} \label{p.reg.h.change}
		Assume \eref{growth}. Let $\psi_\eta$ be as above. One has for all $t \in (0,2)$, $\beta \leq 1$, and $\eta \leq  N^{-1/2}\beta^{-1/2}$ that
		\begin{equation} \label{e.micro.smoothing}
			\E_{N,\beta} \left[ e^{ t N \(\h^{\fluct}(x) - \h^{\fluct \ast \psi_\eta}(x)\)} \right] \leq \exp\(\frac{CN \eta^{2}}{2-t} \)
		\end{equation}
		for a constant $C$ depending only on $\sup_{y : |y-x| \leq 1} |\max(\Delta V(y),0)|$. One also has
		\begin{equation} \label{e.micro.smoothing.lower}
			\h^{\fluct}(x) - \h^{\fluct \ast \psi_\eta}(x) \geq -C \| \mu_{\theta} \|_{L^\infty} \eta^2.
		\end{equation}
	\end{proposition}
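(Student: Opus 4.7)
My plan is to decompose
\begin{equation*}
N\bigl[\h^{\fluct}(x) - \h^{\fluct \ast \psi_\eta}(x)\bigr] = \sum_{i=1}^N g_x(x_i) - N\int g_x\, d\mu_\theta,
\end{equation*}
where $g_x(y) := (\g - \g\ast\psi_\eta)(x-y)$. Three structural facts about $g_x$ drive the argument: (i) nonnegativity, from superharmonicity of $\g$ together with rotational symmetry of $\psi_\eta$; (ii) support contained in $\overline{B_\eta(x)}$, from the mean-value property for $\g$ away from the origin (so $\g\ast\psi_\eta = \g$ outside $B_\eta$); and (iii) the pointwise bound $g_x(y) \leq \log(\eta/|x-y|)$ on $B_\eta(x)$, since $(\g\ast\psi_\eta)(w) \geq -\log\eta$ for $|w|\leq \eta$ by superharmonicity of $\g\ast\psi_\eta$ together with its boundary value $-\log\eta$ at $|w|=\eta$. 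Together with $\|g_x\|_{L^1} \leq C\eta^2$ (from \cite[Lemma 4.9]{S24}), these facts immediately yield \eref{micro.smoothing.lower}: $\tfrac{1}{N}\sum g_x(x_i) \geq 0$ and $\int g_x\, d\mu_\theta \leq \|\mu_\theta\|_{L^\infty}\|g_x\|_{L^1}$.

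For \eref{micro.smoothing}, since $-tN\int g_x\, d\mu_\theta \leq 0$ it suffices to bound $\E_{N,\beta}[e^{t\sum g_x(x_i)}]$. The target is modeled on the iid estimate
\begin{equation*}
\int e^{tg_x}\, d\mu_\theta \leq 1 + \|\mu_\theta\|_{L^\infty}\int_{B_\eta}\Bigl(\frac{\eta}{|z|}\Bigr)^t\, dz \leq 1 + \frac{C\eta^2}{2-t},
\end{equation*}
where the factor $(2-t)^{-1}$ reflects the critical two-dimensional integrability of $|y|^{-t}$; tensorizing gives $\E_{\mu_\theta^{\otimes N}}[e^{t\sum g_x(x_i)}] \leq \exp(CN\eta^2/(2-t))$. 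To transfer this to the Gibbs measure I would use \eref{Gibbs} and \pref{Kpartlower} to reduce to bounding $\int e^{t\sum g_x(x_i) - \beta N^2 \Fenergy_N(X_N, \mu_\theta)}\,\mu_\theta^{\otimes N}(dX_N)$, then complete the square: introduce a shifted background $\mu_\theta + \lambda\rho$, where $\rho$ is a smeared approximation of $(\delta_x - \psi_\eta(\cdot - x))/\cd$ at a scale $\eta' < \eta$ and $\lambda$ is chosen so that $\beta N\lambda\h^\rho \approx tg_x$. The quadratic expansion
\begin{equation*}
\Fenergy_N(X_N, \mu_\theta + \lambda\rho) = \Fenergy_N(X_N,\mu_\theta) - \lambda\int \h^\rho\,d\fluct + \lambda^2\mcl E(\rho)
\end{equation*}
then allows us to absorb the linear-in-$\fluct$ piece of $t\sum g_x(x_i)$ into the shifted energy, producing an overall prefactor $\exp(\beta N^2\lambda^2\mcl E(\rho))$ after a second application of \pref{Kpartlower} to the shifted background.

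The principal obstacle is calibrating the smearing scale $\eta'$: taking $\eta'\to 0$ matches $\beta N\lambda\h^\rho$ exactly to $tg_x$ but forces $\mcl E(\rho)$ to diverge logarithmically, while $\eta'\sim \eta$ keeps $\mcl E(\rho)$ bounded at the cost of a large mismatch error in the linear term. A direct computation gives $\mcl E(\rho) \sim \log(\eta/\eta')/(\beta^2N^2)$, and balancing against the mismatch yields $\beta N^2\lambda^2\mcl E(\rho) \lesssim N\eta^2/(2-t)$, exactly the exponent asserted in \eref{micro.smoothing}. A cleaner alternative, closer in spirit to the ``isotropic averaging techniques'' flagged in the paragraph preceding the proposition, would be to invoke the framework of \cite{T24,T25,L17}: establish a uniform conditional density bound of the form $\P_{N,\beta}(x_i\in dy\mid x_{-i})\leq C\mu_\theta(y)\,dy$ outside a negligible event, after which the iid bound $\int e^{tg_x}\,d\mu_\theta \leq 1+C\eta^2/(2-t)$ applies particle by particle to give the desired product bound.
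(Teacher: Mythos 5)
Your decomposition, the three structural facts about $g_x := (\g-\g\ast\psi_\eta)(x-\cdot)$, the derivation of the lower bound \eref{micro.smoothing.lower}, and the one-particle estimate $\int e^{tg_x}\,d\mu_\theta \leq 1 + C\eta^2/(2-t)$ are all correct and match the paper. The problem lies in the transfer from the product measure to $\P_{N,\beta}$.

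Your primary route (complete the square against a shifted background) does not close. The Cameron--Martin style shift pays an energy cost scaling like $\beta N^2\lambda^2 \mcl E(\rho)$, and since $\lambda$ must be taken $\sim t/(\beta N)$ to match $\beta N\lambda \h^\rho$ to $t g_x$, this cost is of order $t^2\mcl E(\rho)/\beta$. The kernel singularity forces $\mcl E(\rho)\sim\log(\eta/\eta')$, so the cost is $\sim t^2\log(\eta/\eta')/\beta$, which carries a $1/\beta$ that is simply absent from the target $\exp(CN\eta^2/(2-t))$ (and the paper will later need $\eta = N^{-100}$, making the target $\approx 1$ while $1/\beta\to\infty$). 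The only way to suppress the $1/\beta$ is to send $\eta'\uparrow\eta$, but then $\h^{\rho}$ is capped at height $\log(\eta/\eta')\approx 0$ and the ``mismatch'' $t\sum g_x(x_i)-\beta N\lambda\sum\h^\rho(x_i)$ is essentially the whole singular sum again, so you have merely reproduced the original problem. The balance you assert, $\beta N^2\lambda^2\mcl E(\rho)\lesssim N\eta^2/(2-t)$, cannot be achieved without leaving an uncontrolled mismatch of the same type; the Gaussian shift is the wrong tool precisely because $g_x\notin H^1$.

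Your ``cleaner alternative'' is the right idea and is essentially what the paper does, but the intermediate lemma you propose, $\P_{N,\beta}(x_i\in dy\mid x_{-i})\leq C\mu_\theta(y)\,dy$ outside a negligible event, is stronger than what isotropic averaging actually delivers. The available bound (the paper's \eref{iso.upper}, from \cite{T25}) compares the conditional density at a point to its \emph{own average over a ball} $B_R$, at cost $e^{C\beta NR^2}R^{-2k}$, not to $\mu_\theta$. The paper therefore cannot tensorize a one-particle bound directly; it must (i) split on the number $k$ of particles in $B_\eta(x)$, noting $\g-\g\ast\psi_\eta$ is supported in $B_\eta$ so only those particles contribute; (ii) use \eref{iso.upper} to decouple the integral over $B_\eta(x)^k$ from the probability that $k$ particles lie in $B_{R+\eta}(x)$ at optimal scale $R=N^{-1/2}\beta^{-1/2}$ (so $\beta NR^2 = 1$); and (iii) control that probability with the overcrowding estimate \eref{overcrowd} from \cite{T24}, which supplies the factorial decay needed to resum the series in $k$. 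Steps (ii) and (iii) are what replaces the false pointwise comparison to $\mu_\theta$, and they are the genuinely new inputs your sketch omits.
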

	\begin{proof}
		\textbf{Step 1. }Background results.
        
        We begin by stating a couple results that are consequences of the isotropic averaging technique. All implicit constants will be independent of $x$ due to the bound $\Delta V \leq C$ from \eref{growth}. From \cite[Theorem 1]{T24}, one has for any $R \geq N^{-1/2}$ that
		\begin{equation} \label{e.overcrowd}
			\P_{N,\beta}(|\{x_i \in B_R(x)\}| \geq Q ) \leq e^{-\frac12 \beta Q^2} \quad \forall Q \geq C \beta^{-1} + CNR^2
		\end{equation}
		for $C$ depending only on $\sup_{y : |y-x| \leq 1} |\max(\Delta V(y),0)|$. We will consider $\eta \leq R = N^{-1/2} \beta^{-1/2}$ below.
		
		Let $\tilde{\rho}_k(\cdot | x_{k+1},\ldots,x_M)$ be the Lebesgue density of the law of $x_1,\ldots,x_k$ conditioned on $x_{k+1}, \ldots,x_N$. In \cite[Proposition 3.1, specifically by iterating (3.6)]{T25} it is proved that a.s.\
		\begin{equation} \label{e.iso.upper}
        \begin{split}
			&\tilde{\rho}_k(y_1,\ldots,y_k | x_{k+1},\ldots,x_N) \\
            \leq & \frac{C^ke^{C \beta N k R^2}}{R^{2k}} \int_{B_{R}(y_1) \times \cdots \times B_R(y_k)} \tilde{\rho}_k(z_1,\ldots,z_k | x_{k+1},\ldots,x_N) dz_1 \cdots dz_k,
            \end{split}
		\end{equation}
		for a constant $C$ depending only on $\sup_{i=1,\ldots,k} \sup_{y : |y-y_i|\leq 1} |\max(\Delta V(y),0)|$.

        \textbf{Step 2. }Separate $B_\eta(x)$ and $B_\eta(x)^c$.
        
		Now we are ready to prove \eref{micro.smoothing}. Splitting the integral according to which particles fall within $B_\eta(x)$, one has
		\begin{equation} \label{e.smear.kdecomp}
        \begin{split}
			&\E\left[ e^{ t N \(\h^{\fluct}(x) - \h^{\fluct \ast \psi_\eta}(x)\)} \right] \\
            = &\sum_{k=0}^N \binom{N}{k}  \int_{B_\eta(x)^k \times (\R^2 \setminus B_\eta(x))^{N-k}} e^{ t N \(\h^{\fluct}(x) - \h^{\fluct \ast \psi_\eta}(x)\)} 	\P_{N,\beta}(dX_N).
            \end{split}
		\end{equation}
		Note that $\h^{\fluct}(x) - \h^{\fluct \ast \psi_\eta}(x)$ is measurable with respect to the $\sigma$-algebra generated by the locations of the particles within $B_\eta(x)$. This is because $\g - \g \ast \psi_\eta$ is supported within $B_\eta(0)$, where it is equal to $x \mapsto -\log(|x|/\eta)$.
		
\textbf{Step 3. }Conditioning on particles in $B_\eta(x)^c$.

We abbreviate $X_k = (x_1,\ldots,x_k)$, $Z_k = (z_1,\ldots,z_k)$, and $\tilde \rho_k(X_k) = \tilde \rho_k(x_1,\ldots,x_k | x_{k+1},\ldots,x_N)$. Applying \eref{iso.upper}, we see a.s.\ in $x_{k+1},\ldots,x_N$ that
		\begin{equation} \label{e.disint}
        \begin{split}
			&\lefteqn{\int_{B_\eta(x)^k} e^{ t N \(\h^{\fluct}(x) - \h^{\fluct \ast \psi_\eta}(x)\)} \tilde{\rho}_k(X_k) dX_k}  \\   \leq &\frac{C^ke^{C \beta N R^2 k}}{ R^{2k}} \int_{B_\eta(x)^k}\int_{B_R(x_1) \times \cdots \times B_R(x_k)} e^{ t N \(\h^{\fluct}(x) - \h^{\fluct \ast \psi_\eta}(x)\)} \tilde{\rho}_k(Z_k) dZ_k dX_k \\ 
            \leq &\frac{C^ke^{C \beta N R^2 k}}{R^{2k}} \( \int_{B_{R+\eta}(x)^k} \tilde{\rho}_k(Z_k) dZ_k \) \( \int_{B_\eta(x)^k} e^{ t N \(\h^{\fluct}(x) - \h^{\fluct \ast \psi_\eta}(x)\)}  dX_k \),
            \end{split}
		\end{equation}
		where in the last line we enlarged an integration region from $B_R(x_1) \times \cdots \times B_R(x_k)$ to $B_{R+\eta}(x)^k$.
		
		We now examine $\g - \g \ast \psi_\eta$. Using the convention $f_\eta(x) = \eta^{-2} f(x/\eta)$, from which one derives $\g_{\eta^{-1}} = \eta^2 (\g + \log \eta)$, one has
		$$
		\g - \g \ast \psi_\eta = (\g_{\eta^{-1}} - \g_{\eta^{-1}} \ast \psi_1)_\eta = \eta^2 (\g - \g \ast \psi_1)_\eta.
		$$
		Note $\g - \g \ast \psi_1$ is supported within $B_1(0)$ by harmonicity of $\g$ away from $0$. Within $\overline{B_1(0)}$, we have $\g - \g \ast \psi_1 \leq \g$. It follows that $\g - \g \ast \psi_\eta$ is supported within $\overline{B_\eta(0)}$ and bounded by $-\log(|x|/\eta)$ within its support.
		
		By superharmonicity of $\g$ and the above, we have
		\begin{align*}
			\int_{B_\eta(x)^k} e^{ t N \(\h^{\fluct}(x) - \h^{\fluct \ast \psi_\eta}(x)\)}  dX_k &\leq \int_{B_\eta(x)^k} e^{ t  \(\sum_{i=1}^k \g(x_i - x) - \g \ast \psi_\eta(x_i - x)\)}  dX_k \\ 
			&\leq  \(\int_{B_\eta(x)} \frac{\eta^t}{|x-y|^{t}} dy\)^k\\
            &\leq \(\frac{C\eta^{2}}{2-t}\)^k.
		\end{align*}
	
		Now, integrating \eref{disint} against the law of $x_{k+1},\ldots,x_N$ over $(\R^2 \setminus B_{\eta(x)})^{N-k}$ yields
		\begin{equation} \label{e.smear.singlek}
        \begin{split}
			 &\int_{B_{\eta}(x)^k \times (\R^2 \setminus B_{\eta}(x))^{N-k}} e^{ t N \(\h^{\fluct}(x) - \h^{\fluct \ast \psi_\eta}(x)\)} \P_{N,\beta}(dX_N)  \\
            \leq &\(\frac{C e^{C\beta N R^2} \eta^{2}}{(2-t) R^2}\)^k \P_{N,\beta}\(x_1,\ldots,x_k \in B_{R+\eta}(x)\).
            \end{split}
		\end{equation}

        \textbf{Step 4. } Proof of \eqref{e.micro.smoothing}.
        
		We use exchangeability and \eref{overcrowd} to estimate the last probability in \eref{smear.singlek} as
		\begin{align*}
			& \P_{N,\beta}\(x_1,\ldots,x_k \in B_{R+\eta}(x)\)  \\
             = &\sum_{m=k}^\infty     \frac{\binom{N-k}{m-k}}{\binom{N}{m}} \P_{N,\beta}\(|\{x_i : x_i \in B_{R+\eta}(x) \}| = m\)  \\
			 = &\binom{N}{k}^{-1} \sum_{m=k}^\infty \binom{m}{k} \P_{N,\beta}\(|\{x_i : x_i \in B_{R+\eta}(x) \}| = m\) \\
			\leq &\binom{N}{k}^{-1} \frac{1}{k!}  \sum_{m = k}^\infty m^k \P_{N,\beta}\(|\{x_i : x_i \in B_{R+\eta}(x) \}| = m\)  \\
			 \leq &\binom{N}{k}^{-1} \frac{1}{k!} \( (C\beta^{-1})^{k} \P_{N,\beta}\(|\{x_i : x_i \in B_{R+\eta}(x) \}| \leq C\beta^{-1}\) + \sum_{m = \max(k,C\beta^{-1})}^\infty m^k e^{-\frac12 \beta m^2} \).
		\end{align*}
		The ratio of successive terms in the last sum is
		$$
		\(\frac{m+1}{m}\)^k e^{-\beta m} \leq e^{\frac{k}{m} - \beta m},
		$$
		which is less than $1/2$ so long as $C$ is large enough. We conclude that
		$$
		\P_{N,\beta}\(x_1,\ldots,x_k \in B_{R+\eta}(x)\) \leq (C\beta^{-1})^{k} \binom{N}{k}^{-1} \frac{1}{k!},
		$$
		and combining with \eref{smear.singlek} and \eref{smear.kdecomp} shows
		$$
		\E\left[ e^{ t N \(\h^{\fluct}(x) - \h^{\fluct \ast \psi_\eta}(x)\)} \right] \leq  \sum_{k=0}^\infty \frac{1}{k!} \(\frac{C e^{C\beta N R^2} \eta^{2}}{\beta (2-t) R^2}\)^k = \exp \(\frac{C N\eta^{2}}{2-t}\),
		$$
		where we substituted $R = N^{-1/2} \beta^{-1/2}$. This finishes the proof of \eref{micro.smoothing}.
        
       \textbf{Step 5. } Proof of equation \eqref{e.micro.smoothing.lower}.

       While $\h^{\fluct}-\h^{\fluct \ast \psi_\eta}$ can only be bounded from above in probability, we have a deterministic lower bound. Equation \eref{micro.smoothing.lower} follows from the computation
		$$
		\h^{\fluct}(x) - \h^{\fluct \ast \psi_\eta}(x) \geq \h^{\mu_\theta \ast \psi_\eta}(x) - \h^{\mu_\theta}(x) = (\g \ast \psi_\eta - \g) \ast \mu_\theta.
		$$
		Young's inequality finishes the proof once one notes that $\g \ast \psi_\eta - \g$ has $L^1$ norm of order $\eta^2$.
	\end{proof}
	
	\subsection{Concentration}	We are now in a position to obtain our main concentration result.
	
	\begin{proposition} \label{p.concentration}
		Assume $\beta \leq 1$ and \eref{growth}.
		We have
		\begin{equation}
			\P_{N,\beta}\( \left| \sum_{i=1}^k \h^{\fluct}(y_i)\right| \geq k TN^{-1/2} \) \leq 4k\(e^{-\frac12 TN^{1/2}} + e^{-\beta TN}\),
		\end{equation}
		for any $T \geq C \log N$ for a large enough $C > 0$, uniformly in points $y_1,\ldots,y_k \in \R^2$. Here $N$ is restricted to be large uniformly over selections of $y_1,\ldots,y_k$ in any fixed compact set.
	\end{proposition}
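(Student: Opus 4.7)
The plan is to reduce the $k$-point estimate to a single-point estimate via the union bound and then to decompose $\h^{\fluct}(y)$ into a smoothed part and a remainder, bounding each by a distinct mechanism. Since $\bigl|\sum_{i=1}^k a_i\bigr| \geq k T N^{-1/2}$ implies $\max_i |a_i| \geq TN^{-1/2}$, a union bound over $i$ produces the factor $k$, so it suffices to establish
$$\P_{N,\beta}\bigl(|\h^{\fluct}(y)| \geq TN^{-1/2}\bigr) \leq 4\bigl(e^{-\frac12 TN^{1/2}} + e^{-\beta TN}\bigr)$$
uniformly in $y$ in any fixed compact set. I fix $\eta = N^{-1/2}$, which is admissible in Proposition~\ref{p.reg.h.change} since $\beta \leq 1$, and write $\h^{\fluct}(y) = \h^{\fluct \ast \psi_\eta}(y) + \bigl(\h^{\fluct}(y) - \h^{\fluct \ast \psi_\eta}(y)\bigr)$, so that $\{|\h^{\fluct}(y)| \geq TN^{-1/2}\}$ is contained in the union of the smoothed event and the remainder event at level $\tfrac12 TN^{-1/2}$.

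For the remainder, the upper tail is controlled by Markov's inequality applied to \eref{micro.smoothing} with $t=1$: the exponential moment is $O(1)$ at $\eta = N^{-1/2}$, hence
$$\P_{N,\beta}\bigl(\h^{\fluct}(y) - \h^{\fluct \ast \psi_\eta}(y) \geq \tfrac12 TN^{-1/2}\bigr) \leq e^{C - \frac12 TN^{1/2}},$$
which is $\leq e^{-\frac12 TN^{1/2}}$ once $T \geq C \log N$. The lower tail is handled deterministically by \eref{micro.smoothing.lower}, which gives $\h^{\fluct}(y) - \h^{\fluct \ast \psi_\eta}(y) \geq -C\|\mu_\theta\|_{L^\infty}\eta^2 \geq -C/N$, so this tail event is empty for the relevant range of $T$.

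For the smoothed contribution I execute a Chernoff argument in the Gibbs representation \eref{Gibbs}. On the high-probability confinement event $\{x_1,\ldots,x_N \in B_L(0)\}$ with $L = N^{99}$, whose complement has probability negligible relative to $e^{-\beta TN}$ by \eref{strong confinement} and \eref{growth}, the first moment $\int |x| |\fluct|(dx)$ is bounded by $CN^{99}$. Writing $\fluct \ast \psi_\eta = (\fluct \ast \phi_{\eta/2}) \ast \phi_{\eta/2}$ and applying Lemma~\ref{l.minE} to the signed measure $\fluct \ast \phi_{\eta/2}$ at regularization scale $\eta/2$ with $\ep = \tfrac12 T N^{-1/2}$, the hypothesis $|\h^{\fluct \ast \psi_\eta}(y)| \geq \tfrac12 T N^{-1/2}$ forces
$$\mcl E(\fluct \ast \phi_{\eta/2}) \geq \frac{c_0 T^2}{C N \log N}.$$
Lemma~\ref{l.regularization} at scale $\eta/2$ then upgrades this to $\Fenergy_N(X_N,\mu_\theta) \geq c T^2/(N \log N) - C\log N/N$, which exceeds $cT^2/(N\log N)$ once $T \geq C\log N$. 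Multiplying by $\beta N^2$, on this event $e^{-\beta N^2 \Fenergy_N} \leq e^{-c\beta N T^2/\log N} \leq e^{-c\beta NT}$, and Proposition~\ref{p.Kpartlower} together with $\mu_\theta^{\otimes N}$ being a probability measure closes the Chernoff estimate.

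The main obstacle is the tight calibration of $\eta$: it must be small enough that Proposition~\ref{p.reg.h.change} gives the clean tail $e^{-TN^{1/2}/2}$ at $t$ close to $1$, yet not so small that the $\log \eta^{-1}$ denominator in Lemma~\ref{l.minE} and the renormalization penalty $\g(\eta)/N$ in Lemma~\ref{l.regularization} overwhelm the energetic gain from the smoothed fluctuation being macroscopically large. The scale $\eta = N^{-1/2}$ is essentially the unique balance point, and the hypothesis $T \geq C \log N$ is precisely what absorbs the logarithmic slack arising from the first-moment cutoff, from the Lemma~\ref{l.minE} denominator, and from the renormalization.
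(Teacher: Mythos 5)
Your argument correctly reproduces the paper's strategy: reduce to $k=1$ by union bound, smear the fluctuation, control the smeared/unsmeared difference via \pref{reg.h.change} and the deterministic lower bound, then run the Chernoff-type estimate using \lref{minE}, \lref{regularization}, and $\Kpart_{N,\beta} \geq 1$. Your choice $\eta = N^{-1/2}$ (vs.\ the paper's $\eta = N^{-100}$) is a legitimate simplification: every log-factor stays $O(\log N)$ and the exponential moment in \eref{micro.smoothing} is $e^{CN\eta^2} = e^C$, at the harmless cost of degrading the first tail to $e^{-TN^{1/2}/4}$ rather than $e^{-TN^{1/2}/2}$ for small $T$.

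There is, however, a genuine gap in the treatment of large $T$. You claim that the complement of the confinement event $\{x_1,\ldots,x_N\in B_{N^{99}}(0)\}$ ``has probability negligible relative to $e^{-\beta TN}$,'' but \eref{strong confinement} with \eref{growth} only produces a bound of the form $e^{-c\beta N^{p}}$ for a fixed power $p$ (the paper quotes $e^{-\beta N^2}$). This dominates $e^{-\beta NT}$ only when $T \lesssim N^{p-1}$; for $T$ larger the inequality reverses and your argument fails. The statement is asserted for all $T \geq C\log N$, and the application in \cref{exph} sums over dyadic scales $T = 2^\ell C_0 \log N$ with $\ell$ unbounded, so large $T$ cannot be discarded. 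The paper closes this by treating $T \geq N$ separately (Substep 3.2): there $|\h^{\fluct \ast \psi_{2\eta}}(y)|$ admits the deterministic bounds $-\log\max_i|x_i| - C \leq \h^{\fluct\ast\psi_{2\eta}}(y) \leq C\log N$, so the upper tail is empty once $\ep > C\log N$, and the lower tail forces $\log\max_i|x_i| \gtrsim TN^{-1/2}$, a far-field event whose confinement estimate now involves a cutoff $L = e^{cTN^{-1/2}}$ growing with $T$, which does beat $e^{-\beta NT}$. You would need an argument of this form — either splitting on $T$ as the paper does, or letting the confinement radius $L$ depend on $T$ — to cover the full stated range.
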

	
	\begin{proof}

    \textbf{Step 1.} First simplification. 
    
		First, it is enough to bound
		\begin{equation*}
			\P_{N,\beta}\(\left| \h^{\fluct}(y_1)\right| \geq  TN^{-1/2} \) \leq 4\(e^{-\frac12 TN^{1/2}} + e^{-\beta TN}\),
		\end{equation*}
		by a pigeonhole argument in $k$ and a union bound. We will also let $y_1 = 0$ for simplicity. Let $\ep = \frac{1}{2}TN^{-1/2}$, and $\mcl G_{\pm}({\lambda},\nu) = \{X_N : \pm \h^{\nu}(0) \geq  \lambda \}$ for any $\nu$ and $\lambda > 0$. Define $\eta = N^{- 100}$, and let $\psi_{2\eta} = \phi_\eta \ast \phi_\eta$.
		
	\textbf{Step 2.} Now we estimate the probability that $\h^{\fluct} (0)$ is large but $\h^{\fluct \ast \psi_{2\eta}}(0)$ 
    is not.
    
    We need to estimate
		\begin{equation*}
			\P_{N,\beta}\(\mcl G_+(2\ep,\fluct) \cap \(\mcl G_+(\ep,\fluct \ast \psi_{2\eta})\)^c \) \leq \P_{N,\beta}(\h^{\fluct}(0) - \h^{\fluct \ast \psi_{2\eta}}(0) \geq  \ep).
		\end{equation*}
		The Chebyshev inequality coupled with Proposition \ref{p.reg.h.change} (with $t=1$) yields 
		\begin{equation*}
			\P_{N,\beta}\left(\h^{\fluct}(0) - \h^{\fluct \ast \psi_{2\eta}}(0) \geq  \ep\right) \leq e^{-N \epsilon+CN\eta^2} \leq 2 e^{-\frac12 TN^{1/2}}.
		\end{equation*}
		
	\textbf{Step 3.} Next, we turn to estimating the probability that $\h^{\fluct \ast \psi_{2\eta}}(0)$ exceeds $\ep$. We will divide into two cases: $T \leq N$ and $T > N$. 

    \textbf{Substep 3.1} We first consider $T \leq N$.
     
    Abbreviate $\mcl G_+ = \mcl G_+(\ep, \fluct \ast \psi_{2\eta}) \cap \{\int |x| |\phi_\eta \ast \fluct|(dx) \leq \delta^{-1}\}$ for $\delta^{-1} = N^{100}$, 
    and note first by \lref{regularization} we have
		\begin{equation} \label{e.reg.app}
			\int_{\mcl G_+}e^{-\beta N^2 \Fenergy_N(X_N,\mu_\theta)}\mu_{\theta}^{\otimes N}(dX_N) \leq e^{\beta N^2 \Error_1(\eta)} \int_{\mcl G_+} e^{-\beta N^2 \mcl E(\phi_\eta \ast \fluct)} \mu_\theta^{\otimes N}(dX_N)
		\end{equation}
		for 
		$$
		\Error_1(\eta) \leq (2N)^{-1} \g(\eta) + C
		 \left(\|\mu_\theta\|_{L^\infty(\R^\d)}+\|\mu_\theta\|_{L^\infty(\R^\d)}^2\right) \eta^2 \leq \frac{C\log N}{N}.$$
		 By \lref{minE} and recalling $\psi_{2\eta} = \phi_\eta \ast \phi_\eta$, we have
		\begin{align*}
		\mcl G_+ &\subset \left\{\mcl E(\fluct \ast \phi_\eta) \geq \frac{\c_0 \ep^2}{1 + \log \ep^{-1} + \log \delta^{-1} + \log \eta^{-1} } \right\} \\ &\subset \left\{\mcl E(\fluct \ast \phi_\eta) \geq \frac{T^2}{CN \log N } \right\}.
		\end{align*}
		Note further that $\frac{T^2}{CN \log N} \geq \frac{CT}{N}$. Assembling the above into \eref{reg.app}, we find
		\begin{align*}
			\int_{\mcl G_+} e^{-\beta N^2 \Fenergy(X_N,\mu_\theta)}\mu_{\theta}^{\otimes N}(dX_N) &\leq e^{C\beta N\log N - C \beta T N } \leq e^{-\beta TN }.
		\end{align*}
		\pref{Kpartlower} tells us that $K_{N,\beta} \geq 1$, so we can compute
		\begin{equation*}
			\P_{N,\beta}\(\mcl G_+\) \leq e^{-\beta TN }.
		\end{equation*}
		We have proved
        \begin{equation}\label{e.half concentration}
			\P_{N,\beta}(\mcl G_+(2\ep, \fluct)) \leq e^{-\frac12 TN^{1/2}}  + e^{-\beta T N} + \P_{N,\beta}\left(\int |x| |\phi_\eta \ast \fluct|(dx) > N^{100} \right).
		\end{equation}
		It remains to estimate the probability of the event that $\int |x||\phi_\eta \ast \fluct|(dx)>N^{100}$. Clearly, we have
		$$
		\int |x||\phi_\eta \ast \fluct|(dx) \leq \int (|x|+\eta) |\fluct|(dx) =  \int |x| |\fluct|(dx) + 2\eta,
		$$
		and if all particles are within a distance of $N^{99}$ from the origin then this quantity is bounded by $N^{100}$. It is easy to show that this event is extremely likely; say by using \eref{strong confinement} from \cite[Theorem 3]{T25}, that $\zeta = V + \h^{\mu_V} \geq V + \g - C$ for large inputs, and \eref{conf} to see
		\begin{align*}
			\P_{N,\beta} \left(\exists ~|x_i| \geq N^{99} \right) &\leq CN \int_{|x| \geq N^{99}} e^{-\beta N (V(x) - \log |x| - C)}dx \\
			&\leq CN \inf_{|x| \geq N^{99}} e^{-\frac{\beta N}{2}(V(x) - \log |x| - 2C)} \int_{|x| \geq N^{99}} e^{-\frac{\theta_\ast}{2} (V(x) - \log |x|)}dx \\
			&\leq \inf_{|x| \geq N^{99}} e^{-\frac{\beta N}{2}(V(x) - \log |x| - 2C) + \log N}.
		\end{align*}
		We use our assumption \eref{growth} on $V$ to bound the above by $e^{-\beta N^2} \leq e^{-\beta N T}$ since $T \leq N$. We note that, with more complicated error terms, one could treat $V$ with growth only sufficiently super-logarithmic. Inserting this bound into \eref{half concentration} finishes substep 3.1.
		
	\textbf{Substep 3.2} We now consider the case $T \geq N$.
    
    We start by bounding the probability of $\mcl G_+(\ep, \fluct \ast \psi_{2\eta})$. Note that 
		$$
		-\log \max_i |x_i| - C \leq \h^{\fluct \ast \psi_{2\eta}}(0) \leq -\log (C \eta) \leq C \log N,
		$$
		 so it is sufficient to bound the probability that $\log \max_i |x_i| \geq TN^{-1/2} - C$. Arguing as before, one can bound this probability as
		$$
		\inf_{|x| \geq \exp(TN^{-1/2} - C)} e^{-\frac{\beta N}{2}(V(x) - \log |x| - 2C) + \log N} \leq \exp\(-\frac{\beta N}{C} e^{\frac{1}{99} TN^{-1/2}} \) \leq e^{-\beta N T}.
		$$
		
	\textbf{Step 4.}  Estimate for $\P_{N,\beta}(\mcl G_-(100\ep, \fluct))$. 	
    
    The estimate for $\P_{N,\beta}(\mcl G_-(100\ep, \fluct))$ is analogous, except that we estimate more easily
		\begin{align*}
			&\lefteqn{  \P_{N,\beta}\(\mcl G_-(2\ep,\fluct) \cap \(\mcl G_-(\ep,\fluct \ast \psi_{2\eta})\)^c \) }  \\
             \leq & \P_{N,\beta}(\h^{\fluct}(0) - \h^{\fluct \ast \psi_{2\eta}}(0) \leq -50 \ep)\\
             =& 0,
		\end{align*}
		since $\h^{\fluct}-\h^{\fluct \ast \psi_{2\eta}} \geq -C\|\mu_\theta\|_{L^\infty}\eta^2$ by Proposition \ref{p.reg.h.change}.		
	\end{proof}
		This completes our discussion of Proposition \ref{p. concentration}; in the following section, we will apply this to our consideration of the microscopic point process.	
        
	\section{Poisson Convergence}
    \label{sect:convergence}
	In this section we use Proposition \ref{p. concentration} to establish Theorem \ref{t.1}. Let us start by reviewing some preliminary information about point processes and their convergence.
	\subsection{Preliminaries}
	This quick summary is based on material contained in \cite[Appendix A]{Lam21} and \cite[$\S 7$]{PGPT24}. First, recall that a generic point process $\Xi$ is a probability measure on locally finite point configurations in $\R^\d$; it can be easier to think of these as probability measures on the space of Radon measures that take the form $\sum_{p \in P}\delta_p$ for countable subsets $P \subset \R^\d$ with no accumulation points. The law of such a process is determined by its Laplace functional
	\begin{equation}\label{e.Laplace}
		\psi(f):=\E_\Xi \left[e^{-\sum_{p \in P}f(p)}\right]
	\end{equation}
	for all Borel-measurable $f:\R^\d \rightarrow [0,\infty)$. 
	
	\begin{defi}\label{d. weak conv}
		Let $\{\Xi_N\}$ be a sequence of point processes with Laplace functionals $\psi_N$, and $\Xi_\infty$ a point process with Laplace function $\psi_\infty$. We say that $\Xi_N \rightarrow \Xi_\infty$ \textit{weakly} if 
		\begin{equation*}
			\lim_{N \rightarrow \infty}\psi_N(f)=\psi_\infty(f)
		\end{equation*}
		for all continuous and compactly supported $f:\R^\d \rightarrow [0,+\infty)$.
	\end{defi}
	A convenient means of studying weak convergence is through correlation functions. The \textit{correlation functions} $R_k$ are associated via
	\begin{equation}\label{e.corr}
		\psi(f):= 1 + \sum_{i=1}^{\infty} \frac{1}{k!} \int_{\R^\d} \prod_{i=1}^{k} \left( \exp(- f(x_{i})) -1 \right) R_{k}(dx_{1},...dx_{k}).\end{equation}
	Intuitively, one can think of $R_k$ as as the probability density for finding $k$ particles in a given set. Indeed, recall that a homogeneous Poisson point process of intensity $\lambda>0$ with respect to Lebesgue reference measure is uniquely characterized by 
	\begin{enumerate}
		\item The number of points in two disjoint sets are independent, and 
		\item $\# \text{ of points in }\Omega \sim \mathrm{Poisson}(\lambda \mathrm{Leb}(\Omega))$.
	\end{enumerate}
	The correlation functions of such a point process are just $R_k=\lambda^k$.

	The following proposition guides our approach to understanding the local point process $Q_{\overline{z},N}$ given in (\ref{e.LPP}), i.e.
    \begin{equation*}
    		Q_{\ov z, N} = \sum_{i=1}^N \delta_{N^{1/\d}(x_i - \ov z)}. 
\end{equation*}
\begin{proposition}\label{p.PP convergence}
		Let $Q_{\overline{z},N}$ be the local point process given in (\ref{e.LPP}). Then, the correlation functions $R_k(y_1,\dots,y_k)$ for $(y_1,\dots,y_k)\subset (\R^\d)^k$ associated to $X_N$ via $x_i=\overline{z}+N^{-1/\d}y_i$, where $X_N$ is distributed according to $\P_{N,\beta}$, are given by 
		\begin{equation}\label{e.corr formula}
			R_k(y_1,\dots,y_k)=\frac{N!}{(N-k)!N^k}\rho_k(x_1,\dots,x_k)
		\end{equation}
		where $\rho_k$ denotes the $k$th marginal of $\P_{N,\beta}$
		\begin{equation}\label{e.marginal}
			\rho_k(x_1,\dots,x_k):=\frac{1}{\Kpart_{N,\beta}}\prod_{i=1}^k \mu_\theta(x_i)\int_{\R^{\d(N-k)}}e^{-\beta N^2 \Fenergy(X_N,\mu_\theta)}\prod_{i=k+1}^N \mu_\theta(x_i)~dx_i.
		\end{equation}
		Furthermore, if 
		\begin{enumerate}
			\item $\lim_{N \rightarrow \infty}R_k^N(y_1,\dots,y_k)=\mu_V(\ov z)^k$
			\item $\sup_{N \in \mathbb{N}}\sum_{k=1}^\infty \frac{1}{k!}\int_\Omega R_k^N(dy_1,\dots,dy_k)<+\infty$ for all compact $\Omega \subset \R^\d$
		\end{enumerate}
		then $Q_{\overline{z},N}$ converges weakly to a homogeneous Poisson point process of intensity $\mu_V(\ov z)$.
	\end{proposition}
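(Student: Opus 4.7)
The plan is to prove the two assertions in turn. For the representation \eref{corr formula}, I would start from the defining property that, for a point process $\Xi_N = Q_{\ov z, N}$ with correlation functions $R_k^N$, one has
\begin{equation*}
\E\left[\sum_{i_1 \neq \cdots \neq i_k} f\bigl(N^{1/\d}(x_{i_1}-\ov z),\ldots,N^{1/\d}(x_{i_k}-\ov z)\bigr)\right] = \int_{(\R^\d)^k} f(y_1,\ldots,y_k) R_k^N(dy_1,\ldots,dy_k)
\end{equation*}
for nonnegative Borel $f$. Using exchangeability of $\P_{N,\beta}$, the left side equals $\frac{N!}{(N-k)!}$ times the expectation with $(i_1,\ldots,i_k)=(1,\ldots,k)$, which expands in terms of $\rho_k$. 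Then I would perform the change of variables $y_i = N^{1/\d}(x_i - \ov z)$, noting that $dx_i = N^{-1} dy_i$ in $\d=2$, producing the Jacobian factor $N^{-k}$. Matching against the right side yields \eref{corr formula}.

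For the convergence statement, the strategy is to work directly at the level of Laplace functionals via \eref{corr}. Fix continuous, compactly supported $f \geq 0$ with support in a compact set $\Omega$. Then $|e^{-f(y)} - 1| \leq \1_\Omega(y)$ pointwise, so each term in the series is dominated by
\begin{equation*}
\frac{1}{k!} \int_{\Omega^k} R_k^N(dy_1,\ldots,dy_k).
\end{equation*}
By hypothesis (2), these bounds are summable uniformly in $N$, so dominated convergence (applied both to the outer sum over $k$ and the inner integrals) reduces the problem to taking the pointwise limit of each term.

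For each fixed $k$, hypothesis (1) gives $R_k^N(y_1,\ldots,y_k) \to \mu_V(\ov z)^k$, and combined with the uniform integrable majorant above I can pass to the limit inside the $k$-fold integral. The result is
\begin{equation*}
\lim_{N\to\infty}\psi_N(f) = \sum_{k=0}^\infty \frac{\mu_V(\ov z)^k}{k!}\left(\int_{\R^\d}(e^{-f(y)}-1)\,dy\right)^k = \exp\left(-\mu_V(\ov z)\int_{\R^\d}(1-e^{-f(y)})\,dy\right),
\end{equation*}
which is precisely the Laplace functional of the homogeneous Poisson point process of intensity $\mu_V(\ov z) \Leb$. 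Since this holds for every continuous compactly supported $f \geq 0$, weak convergence in the sense of Definition \ref{d. weak conv} follows.

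The whole argument is essentially a standard correlation-function/Laplace-functional bookkeeping, so there is no deep obstacle here. The mildest point requiring care is justifying the interchange of limit and infinite sum; hypothesis (2) is precisely designed to make this step immediate via dominated convergence. Of course, in the broader paper the substantive work lies in verifying hypotheses (1) and (2) for $Q_{\ov z,N}$ under $\P_{N,\beta}$, but those quantitative inputs are produced elsewhere (e.g.\ via the concentration estimate \pref{concentration}) and are not needed for this proposition.
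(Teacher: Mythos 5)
Your derivation of \eqref{e.corr formula} is the standard correlation-function change-of-variables computation and matches what the paper calls ``a direct computation.'' A minor point: the Jacobian $dx_i = N^{-1}\,dy_i$ holds in \emph{every} dimension $\d$, not just $\d=2$, since the scaling $N^{-1/\d}$ per coordinate appears to the $\d$-th power.

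For the convergence statement, the paper simply cites \cite[Lemma A.8]{Lam21} as a black box, whereas you reproduce the Laplace-functional argument that presumably lives inside that lemma. The structure is right, but there is a genuine gap in your interchange step for fixed $k$: pointwise convergence $R_k^N \to \mu_V(\ov z)^k$ together with $\sup_N \int_{\Omega^k} R_k^N < \infty$ does \emph{not} imply $\int_{\Omega^k} g\, R_k^N \to \int_{\Omega^k} g\, \mu_V(\ov z)^k$ for bounded continuous $g$. Hypothesis (2) bounds total mass, not the integrand pointwise; a density equal to $1$ plus a spike of height $N$ on a set of measure $1/N$ satisfies both hypotheses yet $\int g R_k^N$ need not converge to $\int g$. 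What is actually needed is local uniform integrability of $\{R_k^N\}_N$ or a locally uniform pointwise bound, and indeed the paper's application supplies exactly this: Proposition~\ref{p. corrfunctions} establishes $|R_k^N(Y_k)| \le (C\mu_V(\ov z))^k$ uniformly in $N$ and $Y_k$, which makes your dominated-convergence step valid. You should either invoke such a pointwise majorant explicitly or strengthen hypothesis (1) to locally uniform convergence, since the argument as written does not close under (1) and (2) alone.
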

    \begin{proof}
        Equation \eqref{e.corr formula} can be verified by a direct computation. 
        The second part of the claim follows from \cite[Lemma A.8]{Lam21}. 
    \end{proof}
    
	We turn now to a study of the $k$-point marginals of $\P_{N,\beta}$.
	
	\subsection{Proof of Theorem \ref{t.1}}
	We start with an explicit computation. 
	\begin{lemma} \label{l.kpoint}
		Let $Y_{k} := (y_{1}, ...y_{k}) \in \R^{\d \times k}$, and $x_{i} := \overline{z} + N^{-\frac{1}{\d}} y_{i}$. Then,
		the $k$-point marginals of $\P_{N,\beta}$ satisfy
		\begin{equation}\label{e.kpointmarg}
        \begin{split}
			&\frac{\rho_k(x_1,\dots,x_k)}{\prod_{i=1}^k \mu_{\theta}(x_i)} \\
            = &\frac{\Kpart_{N-k,\beta}}{\Kpart_{N,\beta}} e^{-\beta k^2 \Fenergy_k(X_k, \mu_\theta)}  \E_{N-k} \left[ e^{-\beta(N-k) \sum_{i=1}^k \h^{\fluct_{N-k}}(x_i) + \beta k(N-k) \int \h^{\mu_\theta}(x) ~\fluct_{N-k}(dx)} \right]
            \end{split}
		\end{equation}
		where 
		$$
		\fluct_{N-k} = \frac{1}{N-k} \sum_{i=k+1}^N \delta_{x_i} - \mu_{\theta},
		$$
		 the expectation $\E_{N-k}$ is over $(x_{k+1},\ldots,x_N)$ distributed via $\P_{N-k,\beta_N}$, and $\h$ is the Coulomb potential defined in (\ref{e.potential}). It follows that the $k$-point correlation functions are given by
		\begin{multline}\label{e.kpoint.Kpart}
			\frac{R_k(y_1,\dots,y_k)}{\prod_{i=1}^k \mu_{\theta}(x_i)}=\frac{N!}{(N-k)!N^k}\frac{\Kpart_{N-k,\beta}}{\Kpart_{N,\beta}} e^{-\beta k^2 \Fenergy_k(X_k, \mu_\theta)}\\
			\times \E_{N-k} \left[ e^{-\beta(N-k) \sum_{i=1}^k \h^{\fluct_{N-k}}(x_i) + \beta k(N-k) \int \h^{\mu_\theta}(x) ~\fluct_{N-k}(dx)} \right].
		\end{multline}
	\end{lemma}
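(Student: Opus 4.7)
The plan is to perform a direct computation starting from the representation (\ref{e.Gibbs}) of $\P_{N,\beta}$. The $k$-point marginal is
\begin{equation*}
\rho_k(x_1,\ldots,x_k) = \prod_{i=1}^k \mu_\theta(x_i) \cdot \frac{1}{\Kpart_{N,\beta}} \int e^{-\beta N^2 \Fenergy_N(X_N,\mu_\theta)} \prod_{i=k+1}^N \mu_\theta(x_i)\, dx_i,
\end{equation*}
so the entire task is to decompose $N^2 \Fenergy_N(X_N,\mu_\theta)$ into a piece depending only on $X_k=(x_1,\ldots,x_k)$, a piece that reconstitutes an $(N-k)$-particle next-order energy, and a cross term that couples them linearly.

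First I would split the fluctuation $\emp_N - \mu_\theta$ according to the first $k$ versus the last $N-k$ particles. Writing $\nu_k = \frac{1}{k}\sum_{i=1}^k \delta_{x_i}$ and $\nu_{N-k} = \frac{1}{N-k}\sum_{i=k+1}^N \delta_{x_i}$, and letting $\fluct_k = \nu_k - \mu_\theta$, $\fluct_{N-k} = \nu_{N-k} - \mu_\theta$, one has the identity
\begin{equation*}
\emp_N - \mu_\theta = \tfrac{k}{N}\fluct_k + \tfrac{N-k}{N} \fluct_{N-k}.
\end{equation*}
Plugging this into the definition (\ref{e.Fenergy}) of $\Fenergy_N$ and expanding the double integral over $\Delta^c$ yields three pieces. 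The diagonal $\Delta$ intersects both the ``first-$k$'' and ``last-$(N-k)$'' self-interactions, so those two pieces are exactly $k^2 \Fenergy_k(X_k,\mu_\theta)$ and $(N-k)^2\Fenergy_{N-k}(X_N\setminus X_k,\mu_\theta)$. The cross term carries no diagonal contribution since the two index sets are disjoint, so it simplifies to $k(N-k) \int \h^{\fluct_{N-k}}\, d\fluct_k$, which I would then break as
\begin{equation*}
k(N-k)\int \h^{\fluct_{N-k}}(x)\, \fluct_k(dx) = (N-k)\sum_{i=1}^k \h^{\fluct_{N-k}}(x_i) - k(N-k)\int \h^{\mu_\theta}(x)\,\fluct_{N-k}(dx),
\end{equation*}
where in the second term I used the symmetry $\int \h^{\fluct_{N-k}}\, d\mu_\theta = \int \h^{\mu_\theta}\, d\fluct_{N-k}$.

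With this decomposition in hand, I would factor $e^{-\beta k^2 \Fenergy_k(X_k,\mu_\theta)}$ out of the integral (it depends only on $X_k$) and recognize that what remains,
\begin{equation*}
\int e^{-\beta (N-k)^2 \Fenergy_{N-k}(X_N\setminus X_k, \mu_\theta)} \cdot e^{-\beta(N-k)\sum_{i=1}^k \h^{\fluct_{N-k}}(x_i) + \beta k (N-k) \int \h^{\mu_\theta} d\fluct_{N-k}} \prod_{i=k+1}^N \mu_\theta(x_i)\, dx_i,
\end{equation*}
is precisely $\Kpart_{N-k,\beta}$ times the expectation $\E_{N-k}$ of the second exponential factor, once one interprets $\P_{N-k,\beta}$ as the $(N-k)$-particle Gibbs measure built with the same background $\mu_\theta$. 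Dividing through by $\Kpart_{N,\beta} \prod_{i=1}^k \mu_\theta(x_i)$ gives (\ref{e.kpointmarg}), and then (\ref{e.kpoint.Kpart}) follows immediately by inserting the formula into the relation $R_k = \tfrac{N!}{(N-k)! N^k} \rho_k$ from (\ref{e.corr formula}).

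There is no serious obstacle here; the only delicate point is the bookkeeping of the diagonal set $\Delta^c$ in the cross term (ensuring no spurious self-interaction correction appears) and matching the definition of $\Kpart_{N-k,\beta}$ to the partition function for the $(N-k)$-particle jellium with the background measure $\mu_\theta$ inherited from the $N$-particle problem.
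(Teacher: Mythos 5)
Your proposal is correct and follows essentially the same route as the paper: the heart of both is the algebraic identity $N^2\Fenergy_N - (N-k)^2\Fenergy_{N-k} = k^2\Fenergy_k - k(N-k)\int \h^{\mu_\theta}\,d\fluct_{N-k} + (N-k)\sum_{i=1}^k \h^{\fluct_{N-k}}(x_i)$, which the paper states as ``a computation with the definition of $\Fenergy_N$'' and you derive explicitly via the decomposition $N(\emp_N-\mu_\theta)=k\fluct_k+(N-k)\fluct_{N-k}$. Your remark that the diagonal contributes nothing to the cross term (disjoint index sets) and that $\Kpart_{N-k,\beta}$ must be read with the same fixed background $\mu_\theta$ inherited from the $N$-particle problem are both correct and match the paper's implicit conventions.
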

	\begin{proof}
		A computation with the definition of $\Fenergy_N$ yields 
		\begin{align*}
			 & N^2 \Fenergy_N(X_k \cup X_{N-k}, \mu_\theta) -  (N-k)^2 \Fenergy_{N-k} (X_{N-k}, \mu_\theta)  \\ 
             = & k^2 \Fenergy_k(X_k,\mu_\theta) - k (N-k) \int \h^{\mu_\theta}(y) ~d\fluct_{N-k}(dy) + (N-k) \sum_{i=1}^k \h^{\fluct_{N-k}}(y_i).
		\end{align*}
		Inserting this into (\ref{e.marginal}) and simplifying yields (\ref{e.kpointmarg}), which coupled with (\ref{e.corr formula}) yields (\ref{e.kpoint.Kpart}).
	\end{proof}
	
	The main goal of the argument is to show that the quantities on the right hand side of \eref{kpointmarg} tend to one as $N \to \infty$ with $k$ fixed. We will handle the terms in the expectation in two steps. We first have a corollary of Proposition \ref{p.concentration} that is more tailored to computing the marginals. 
    For the remainder of the paper, we denote by $L^2(\P_{N,\beta})$ the space of real-valued functions on $\R^{\d \times N}$ that are square-integrable in the probability measure $\P_{N,\beta}$. For any $F \in L^2(\P_{N,\beta})$, we denote
    \begin{equation}
        \| F \|_{L^2(\P_{N,\beta})} := \sqrt{ \int |F|^{2} ~d \P_{N,\beta} }. 
    \end{equation}
	\begin{corollary} \label{c.exph}
		Assume \eref{growth} and consider $k \leq \frac{1}{10}N^{1/2}$ such that $\beta \leq \frac{1}{k C_0 N^{1/2} \log N}$ for a sufficiently large $C_0$. Then for any $F \in L^2(\P_{N,\beta})$,
		\begin{equation} \label{e.exph}
			\left|\E_{N,\beta} \left[ F(X_N) e^{-N\beta \sum_{i=1}^k \h^{\fluct}(y_i)} \right] - \E_{N,\beta} \left[ F(X_N) \right] \right| \leq 4k C_0 \beta N^{1/2} \log N \| F \|_{L^2(\P_{N,\beta})}.
		\end{equation}
	\end{corollary}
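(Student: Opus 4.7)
The plan is to apply the Cauchy--Schwarz inequality to the identity
\[
\E_{N,\beta}[F(X_N)(e^{-Y}-1)] = \E_{N,\beta}[F(X_N)e^{-Y}] - \E_{N,\beta}[F(X_N)], \qquad Y := N\beta \sum_{i=1}^k \h^{\fluct}(y_i),
\]
which reduces the problem to showing $\|e^{-Y}-1\|_{L^2(\P_{N,\beta})} \leq 4kC_0\beta N^{1/2}\log N$. Setting $M := C_0 k\beta N^{1/2}\log N$, the hypothesis on $\beta$ ensures $M \leq 1$.

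I would split $\E_{N,\beta}[(e^{-Y}-1)^2]$ according to whether $|Y|\leq M$ or $|Y|>M$. On the first event, the elementary inequality $|e^{-Y}-1| \leq e^M-1 \leq eM$ (using $M \leq 1$) bounds the contribution by $e^2 M^2$. On the complementary event, I would use $(e^{-Y}-1)^2 \leq \mathbf{1}_{Y \geq 0} + e^{-2Y}\mathbf{1}_{Y<0}$ to reduce the problem to estimating $\P_{N,\beta}(|Y|>M)$ and $\E_{N,\beta}[e^{-2Y}\mathbf{1}_{Y<-M}]$. Invoking \pref{concentration} with the substitution $T = u/(k\beta N^{1/2})$ (which at $u=M$ becomes $T = C_0\log N$, satisfying the hypothesis for $C_0$ sufficiently large) gives
\[
\P_{N,\beta}(|Y|\geq u) \leq 4k\bigl(e^{-u/(2k\beta)} + e^{-uN^{1/2}/k}\bigr) \qquad \text{for all } u \geq M,
\]
and the layer-cake/integration-by-parts identity yields
\[
\E_{N,\beta}[e^{-2Y}\mathbf{1}_{Y<-M}] = e^{2M}\P_{N,\beta}(Y<-M) + \int_M^\infty 2e^{2u}\P_{N,\beta}(Y<-u)\, du.
\]

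The hypotheses $k \leq N^{1/2}/10$ and $\beta \leq (kC_0N^{1/2}\log N)^{-1}$ force both exponents $N^{1/2}/k \geq 10$ and $1/(2k\beta) \geq C_0 N^{1/2}\log N/2$ to exceed the factor $2$ appearing in $e^{2u}$ by a comfortable margin, so the tail integral converges and is exponentially small. Combined with the lower bound $\beta N \geq \theta_\ast$, the estimate $e^{-MN^{1/2}/k} \leq N^{-C_0\theta_\ast}$ is polynomially small in $N$, so that the tail contribution is dominated by $M^2$ once $C_0$ is chosen large relative to $1/\theta_\ast$. Summing the two parts yields $\|e^{-Y}-1\|_{L^2(\P_{N,\beta})}^2 \leq (e^2 + o(1))M^2 \leq 16M^2$, which completes the argument after taking square roots. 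The principal obstacle is controlling the region $\{Y < -M\}$, on which $e^{-2Y}$ can be exponentially large; success hinges precisely on the tail decay rate $N^{1/2}/k$ from \pref{concentration} strictly exceeding $2$, which is guaranteed by the hypothesis $k \leq N^{1/2}/10$.
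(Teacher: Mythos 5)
Your argument is correct and rests on the same two ingredients the paper uses: a Cauchy--Schwarz separation of $F$ from $e^{-Y}-1$ together with the concentration estimate \pref{concentration}, followed by a split into the regions $\{|Y|\leq M\}$ and $\{|Y|>M\}$ with $M = C_0 k\beta N^{1/2}\log N$. The only genuine difference is bookkeeping in the tail: you apply Cauchy--Schwarz once globally and then control $\|e^{-Y}-1\|_{L^2}^2$ on $\{|Y|>M\}$ via a continuous layer-cake integration, whereas the paper applies Cauchy--Schwarz annulus-by-annulus over a dyadic decomposition $\mcl{G}_\ell(\ep)$ of the same region. Both versions hinge on exactly the observation you identify: the decay rates $N^{1/2}/k$ and $1/(2k\beta)$ supplied by \pref{concentration} exceed the growth rate $2$ of $e^{2|Y|}$, thanks to the hypotheses $k\leq N^{1/2}/10$ and $\beta\leq (kC_0 N^{1/2}\log N)^{-1}$, and the condition $\theta_\ast>0$ ensures the resulting tail is $o(M^2)$ once $C_0\gg 1/\theta_\ast$. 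Your layer-cake formulation is a bit cleaner since it avoids summing a dyadic series; the paper's version trades that for avoiding integration by parts. Either way, the content is the same.
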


	\begin{proof}
  Given a non-negative integer $l$, let $\mcl G_\ell(\ep) = \{X_N : |\sum_{i=1}^k \h^{\fluct}(y_i)| \in [2^\ell k\ep,2^{\ell+1} k\ep) \}$. We will set $\ep = C_0 N^{-1/2}\log N$ for a fixed large $C_0 > 0$, to be determined later.

    We start by writing 
    \begin{equation}
    \label{eq:start}
        \begin{split}
             &\left|\E_{N,\beta} \left[ F(X_N) e^{-N\beta \sum_{i=1}^k \h^{\fluct}(y_i)} \right] - \E_{N,\beta} \left[ F(X_N) \right] \right| \\
            =& \left| \E_{N,\beta} \left[ F(X_N) \left(1 - e^{-N\beta \sum_{i=1}^k \h^{\fluct}(y_i)}\right) \right] \right| \\
            \leq & \E_{N,\beta} \left[ \1_{\cup_{\ell \geq 0} \mcl G_\ell(\ep)} |F(X_N)| \left| 1 - e^{-N\beta \sum_{i=1}^k \h^{\fluct}(y_i)} \right| \right] \\
            & \quad + \E_{N,\beta} \left[ |F(X_N)| \1_{\(\cup_{\ell \geq 0} \mcl G_\ell(\ep)\)^c} \left | 1 - e^{-N\beta \sum_{i=1}^k \h^{\fluct}(y_i)} \right | \right]
        \end{split}
    \end{equation}
    
    We being by estimating the third line in equation \eqref{eq:start}. For a fixed $l$ we have, via Cauchy-Schwarz
		\begin{align*}
			 &\E_{N,\beta} \left[ \1_{\mcl G_\ell(\ep)} |F(X_N)| \left| 1 - e^{-N\beta \sum_{i=1}^k \h^{\fluct}(y_i)} \right| \right] \\
             \leq &\| F \|_{L^2(\P_{N,\beta})} \(1+ e^{2^{\ell+1} kN\beta \ep}\) \sqrt{\P_{N,\beta}(\mcl G_\ell(\ep))} \\  \leq & 2 \sqrt{k} \| F \|_{L^2(\P_{N,\beta})} \(1+ e^{2^{\ell+1}k  N\beta \ep}\)e^{-\beta 2^{\ell-1} N^{3/2} \ep}
			\\  \leq & 2 \sqrt{k} \| F \|_{L^2(\P_{N,\beta})} \(1+ e^{C_0 2^{\ell+1}k  \beta N^{1/2}  \log N }\)e^{- C_0 \beta 2^{\ell-1} N \log N},
		\end{align*}
		where $C_0$ is chosen large enough that \pref{concentration} can be applied. Note that we used $\beta \leq \frac12 N^{-1/2}$ to simplify the error term from \pref{concentration}. Since $N^{1/2} > 10k$, we can bound the above by
		$$
		 \| F \|_{L^2(\P_{N,\beta})} e^{- \frac{C_0}{2} \beta 2^{\ell-1} N \log N},
		$$
		which decays rapidly with increasing $\ell$. We sum over $\ell = 0,1,\ldots$ to see
		\begin{equation} \label{e.onG.est}
        \begin{split}
		&\E_{N,\beta} \left[ \1_{\cup_{\ell \geq 0} \mcl G_\ell(\ep)} |F(X_N)| \left| 1 - e^{-N\beta \sum_{i=1}^k \h^{\fluct}(y_i)} \right| \right] \\
        \leq &\sum_{l} \E_{N,\beta} \left[ \1_{\mcl G_\ell(\ep)} |F(X_N)| \left| 1 - e^{-N\beta \sum_{i=1}^k \h^{\fluct}(y_i)} \right| \right]\\
        \leq &\| F \|_{L^2(\P_{N,\beta})}  e^{- \frac{C_0}{8} \beta N \log N}.
        \end{split}
		\end{equation}
		We now turn to estimating the fourth line in equation \eqref{eq:start}. Using again \pref{concentration} we have
		\begin{equation} \label{e.offG.est}
        \begin{split}
		\E_{N,\beta} \left[ |F(X_N)| \1_{\(\cup_{\ell \geq 0} \mcl G_\ell(\ep)\)^c} \left | 1 - e^{-N\beta \sum_{i=1}^k \h^{\fluct}(y_i)} \right | \right] &\leq \( e^{2k C_0 \beta N^{1/2} \log N}  - 1 \)\| F \|_{L^2(\P_{N,\beta})} \\
		&\leq  4k C_0 \beta N^{1/2} \log N \| F \|_{L^2(\P_{N,\beta})}.
        \end{split}
		\end{equation}
		For the last inequality, we used the assumed upper bound on $\beta$. Note that for $C_0$ sufficiently large we have
		$$
		 e^{- \frac{C_0}{8} \beta N \log N} \leq N^{-100} \leq 4k C_0 \beta N^{1/2} \log N
		$$
		since $\theta_\ast = \inf_N N \beta_N > 0$ by \eref{conf}, and so the error term from \eref{onG.est} can be absorbed into \eref{offG.est}.
	\end{proof}
	
	We will apply Corollary \ref{c.exph} except with the $\E_{N-k}$ expectation over $(x_{k+1},\ldots,x_N)$ as defined in \lref{kpoint} and with $F(X_{N-k})=e^{\beta k(N-k) \int \h^{\mu_\theta}(y) ~d\fluct_{N-k}(dy)} $. To do so, we need control on the second moment of $F$. We will use that the exponential moments of fluctuations of linear statistics for the Coulomb gas are well controlled by a priori energy estimates. In particular, we will use the following, which is \cite[Corollary 5.21]{S24}.
	
	\begin{proposition}(\cite[Corollary 5.21]{S24})
    \label{p.fluct}
		Suppose that $\varphi: \mathbb{R}^{d} \to \mathbb{R}$ is such that $\nabla \varphi \in L^2 \cap L^\infty$ and $\beta \leq 1/2$. Then,
		\begin{equation*}
			\left|\log \E \left(\exp\frac{\beta}{C\|\varphi\|^2} \Fluct[\varphi]^2\right)\right|\leq C\beta| \log \beta| N
		\end{equation*}
		where 
		\begin{equation*}
			\|\varphi\|=\|\nabla \varphi\|_{L^2}+\|\nabla \varphi\|_{L^\infty}.
		\end{equation*}
		and $\Fluct[\varphi]=N \int \varphi~d\fluct_N$.
	\end{proposition}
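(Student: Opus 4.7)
The plan is to follow the transport-of-measure (change-of-variables) scheme standard in the Coulomb gas literature. First, I would reduce the quadratic exponential moment to a linear Laplace-transform bound via the Gaussian identity
\begin{equation*}
\exp\(\lambda X^{2}\) = \frac{1}{\sqrt{\pi}} \int_{\R} \exp\(2\sqrt{\lambda}\,sX - s^{2}\)\,ds,
\end{equation*}
so that it suffices to establish
\begin{equation*}
\log \E\left[\exp\(s\, \Fluct[\varphi]\)\right] \leq \frac{C s^{2} \|\varphi\|^{2}}{\beta} + C \beta |\log \beta|\, N
\end{equation*}
for $s$ in a suitable interval. Plugging in $\lambda = \beta/(C\|\varphi\|^{2})$ and computing the resulting Gaussian integral in $s$ then recovers the stated quadratic exponential bound.

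To control this linear Laplace transform, I would apply a coordinate-wise change of variables $x_{i} \mapsto T_{t}(x_{i}) := x_{i} + t\,\xi(x_{i})$ for a smooth vector field $\xi$ built from $\nabla \varphi$ (morally, $\xi = \nabla \varphi$ inside a localization of the droplet), acting on the partition-function representation
\begin{equation*}
\E[\exp(s \Fluct[\varphi])] = \frac{1}{\Kpart_{N,\beta}}\int e^{-\beta N^{2}\Fenergy_{N}(X_{N},\mu_{\theta}) + s\,\Fluct[\varphi](X_{N})}\,\mu_{\theta}^{\otimes N}(dX_{N}).
\end{equation*}
Expanding each of the three resulting pieces — the Jacobian, the variation of $\Fenergy_{N}$, and the variation of $\Fluct[\varphi]$ — to second order in $t$, and invoking the identity $-\Delta \h^{\mu_{\theta}} = \mathsf{c}_{2}\mu_{\theta}$ together with the Euler-Lagrange equation for $\mu_{\theta}$, the first-order variation in $t$ of the free energy cancels on the support of $\mu_{\theta}$; the surviving second-order piece is a positive-definite Dirichlet-like quadratic form of size $C t^{2} N \|\nabla\xi\|_{L^{2}}^{2}$, while the cross term coming from linearizing $s\Fluct[\varphi]$ is $\approx s t N \int |\nabla\varphi|^{2}\,d\mu_{\theta}$. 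Optimizing $|t| \sim s/(\beta N)$ balances these contributions and produces a bound linear in $s^{2}\|\varphi\|^{2}/\beta$, as required.

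The main obstacle, and the source of the $C\beta|\log \beta|N$ error, is the short-distance singularity of the Coulomb kernel inside $\Fenergy_{N}$, which enters whenever the transport moves nearly-colliding particles. I would handle this exactly as in \lref{regularization}: smear each point mass by convolution with $\phi_{\eta}$ at a microscopic scale $\eta \sim \beta^{1/2}$, bound the per-particle renormalization error by $\frac{1}{N}\g(\eta) + C(\|\mu_{\theta}\|_{L^{\infty}} + \|\mu_{\theta}\|_{L^{\infty}}^{2})\eta^{2}$, and sum over the $N$ particles. Optimizing the scale $\eta$ against the transport scale yields precisely the logarithmic loss $\beta |\log\beta|N$ after multiplying by $\beta N$. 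The two-sided form of the statement is automatic: the lower bound $\E[\exp(\lambda X^{2})] \geq 1$ follows from Jensen, while the transport argument applies to both signs of $s$ and hence gives the absolute value bound.
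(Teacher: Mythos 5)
The proposition you are proving is cited from \cite[Corollary 5.21]{S24}, where it is established by an \emph{energy comparison} argument, not by transport. The outline there (and in the precursor \cite{CHM18}) is: integrate by parts to write $\Fluct[\varphi]=\frac{1}{\cd}\int \nabla\varphi\cdot\nabla\h^{\fluct\ast\phi_\eta}+\text{(smearing error)}$, bound the main term via Cauchy--Schwarz by $\|\nabla\varphi\|_{L^2}\sqrt{\mcl E(\fluct\ast\phi_\eta)}$, then control exponential moments of the regularized energy $\mcl E(\fluct\ast\phi_\eta)$ by comparison with $\Fenergy_N$ and the partition-function lower bound $\Kpart_{N,\beta}\geq 1$. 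This route uses exactly the hypothesis $\nabla\varphi\in L^2\cap L^\infty$ and nothing more. Your route is genuinely different; unfortunately it has gaps that are not merely technical.

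First, the hypothesis does not support your transport map. You set $T_t(x)=x+t\nabla\varphi(x)$, but $\nabla\varphi\in L^2\cap L^\infty$ gives no control on $D^2\varphi$. Without $\|D^2\varphi\|_{L^\infty}<\infty$ the map $T_t$ need not be injective or even locally invertible for any $t\neq 0$, so the change of variables is not available. Mollifying $\varphi$ first introduces commutator errors between the mollifier and the interaction that would need their own estimate, and this is not addressed. The energy method sidesteps this entirely because it never pushes the configuration forward.

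Second, the claimed cancellation is not correct as stated, and this is the heart of the matter. With $\xi=\nabla\varphi$, the first-order variation in $t$ of the exponent is (schematically)
\begin{equation*}
-\beta\Bigl[\sum_{i\neq j}\nabla\g(x_i-x_j)\cdot\nabla\varphi(x_i)-N\sum_i\nabla\h^{\mu_\theta}(x_i)\cdot\nabla\varphi(x_i)\Bigr]
+\sum_i\Delta\varphi(x_i)+\sum_i\nabla\log\mu_\theta(x_i)\cdot\nabla\varphi(x_i),
\end{equation*}
and the Euler--Lagrange relation \eref{. EL} together with $-\Delta\h^{\mu_\theta}=\cd\mu_\theta$ kills only the \emph{deterministic leading order} after replacing empirical sums by their $\mu_\theta$-averages. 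What survives is the so-called anisotropy term, a fluctuation of a singular two-body statistic, plus a fluctuation of $\Delta\varphi+\nabla\log\mu_\theta\cdot\nabla\varphi$; neither is proportional to $\Fluct[\varphi]$, and neither is small a priori. Controlling the anisotropy term is precisely what makes transport-based CLT proofs (e.g.\ Leblé--Serfaty, Bauerschmidt--Bourgade--Nikula--Yau) long, and it is accomplished there by bootstrapping or by separate a priori energy estimates --- which would re-import the very energy method the cited proof uses. In addition, the generic transport approach requires $\xi$ to solve a continuity-type equation linking $\div(\mu_\theta\xi)$ to $\varphi$; taking $\xi=\nabla\varphi$ is not that solution, so even the deterministic leading order does not produce $-s\Fluct[\varphi]$ after balancing $t$.

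Third, a smaller issue with the reduction: after the Gaussian linearization you need the Laplace-transform bound for \emph{all} $s\in\R$, not ``a suitable interval,'' and you need the quadratic-in-$s$ coefficient to be strictly less than $1$ so the $\int e^{-s^2}\,ds$ factor survives; otherwise the $|s|\to\infty$ tail must be handled by a separate crude bound (e.g.\ $|\Fluct[\varphi]|\leq CN\|\varphi\|$). Your identification of the $\beta|\log\beta|N$ error with the $\frac{1}{2N}\g(\eta)+C\eta^2$ smearing cost from \lref{regularization} is, however, correct in spirit and matches the source of the same term in the energy proof.

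In short: the Gaussian linearization step and the smearing-error bookkeeping are fine, but the core transport step as written does not go through --- the vector field is wrong, the cancellation claim conflates the deterministic leading order with the full first-order variation, and the regularity hypotheses do not support the change of variables. The cited result is proved by the energy/Cauchy--Schwarz method, which avoids all three issues and is the natural argument under the stated hypotheses.
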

	We use this to obtain our desired bound on $F$ in $L^2$.
	\begin{proposition}\label{p.lower bound}
		Let $X_{N}$ be distributed according to $\P_{N,\beta}$ and $\beta \leq 1/2$. Then,
		\begin{equation}\label{e.UB}
			\E\left[e^{2\beta kN\int \h^{\mu_\theta(y)}\fluct(dy)}\right] \leq  \exp\left(Ck\beta(\log \beta)\sqrt{N}+Ck\right)
		\end{equation}
		and
		\begin{equation}\label{e.LB}
			\E\left[e^{2\beta kN\int \h^{\mu_\theta(y)}\fluct(dy)}\right] \geq \exp\left(-Ck\beta(\log \beta)\sqrt{N}-Ck\right).\end{equation}
		In particular, if $\beta \leq \frac{1}{\sqrt{N} \log N}$ then $\E\left[e^{2\beta kN\int \h^{\mu_\theta(y)}\fluct(dy)}\right]$ is uniformly bounded above and below as $N \to \infty$ by $e^{\pm Ck}$
	\end{proposition}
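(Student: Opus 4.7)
Proof plan. I start by observing that $N \int \h^{\mu_\theta}(y)\, \fluct(dy) = \Fluct[\h^{\mu_\theta}]$ in the notation of \pref{fluct}, so the quantity of interest becomes $\E_{N,\beta}[e^{2\beta k \Fluct[\varphi]}]$ with $\varphi := \h^{\mu_\theta}$. To apply \pref{fluct} I must verify that $\|\varphi\| = \|\nabla \varphi\|_{L^2} + \|\nabla \varphi\|_{L^\infty}$ is bounded uniformly in $\theta \geq \theta_\ast$. The $L^\infty$ bound comes from the representation $\nabla \h^{\mu_\theta}(x) = -\int (x-y)|x-y|^{-2}\, \mu_\theta(dy)$ by splitting the integral at $|x-y|=1$ and using $\|\mu_\theta\|_{L^\infty} \leq C$ (from the density estimates of \cite{AS22}) together with $\mu_\theta(\R^2) = 1$. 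The $L^2$ bound follows from $\|\nabla \h^{\mu_\theta}\|_{L^2}^2 = 4\pi \mcl E(\mu_\theta)$ together with the comparison $\mcl E_\theta(\mu_\theta) \leq \mcl E_\theta(\mu_V)$, which yields uniform boundedness once one observes that the entropic correction in $\mcl E_\theta$ is benign for $\theta \geq \theta_\ast > 0$.

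For the upper bound \eref{UB}, the plan is to combine the deterministic AM--GM inequality $2\beta k u \leq \tfrac{\alpha}{2} u^2 + \tfrac{2\beta^2 k^2}{\alpha}$ (valid for any $\alpha > 0$) with \pref{fluct}. This gives
\[
\E\left[e^{2\beta k \Fluct[\varphi]}\right] \leq e^{2\beta^2 k^2/\alpha}\, \E\left[e^{\alpha \Fluct[\varphi]^2/2}\right],
\]
and for $\alpha \leq 2\beta/(C\|\varphi\|^2)$, H\"older's inequality combined with \pref{fluct} bounds the second factor by $\exp(C \alpha \|\varphi\|^2 |\log\beta|N)$. Optimizing $\alpha \simeq \beta k / (\|\varphi\|\sqrt{|\log\beta|N})$ (an admissible choice whenever $k \ll \sqrt{|\log\beta|N}$, which holds for $k$ fixed and $N$ large) then produces $\log \E[e^{2\beta k \Fluct[\varphi]}] \leq C k\beta \sqrt{|\log\beta|N}$, which is dominated by $Ck\beta(\log\beta)\sqrt{N} + Ck$ since $\sqrt{|\log\beta|} \leq |\log\beta|$ for $\beta \leq 1/e$.

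For the lower bound \eref{LB}, Jensen's inequality gives $\E[e^{2\beta k \Fluct[\varphi]}] \geq e^{2\beta k \E[\Fluct[\varphi]]}$, reducing the task to bounding $|\E[\Fluct[\varphi]]|$ from above. By Cauchy--Schwarz, $|\E[\Fluct[\varphi]]| \leq \sqrt{\E[\Fluct[\varphi]^2]}$, and \pref{fluct} yields $\E[\Fluct[\varphi]^2] \leq C \|\varphi\|^2 |\log\beta| N$ via the elementary identity $\gamma \E[X] \leq \log \E[e^{\gamma X}]$ applied with $X = \Fluct[\varphi]^2$ and $\gamma = \beta/(C\|\varphi\|^2)$. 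This yields $2\beta k|\E[\Fluct[\varphi]]| \leq Ck\beta\sqrt{|\log\beta|N}$ and hence \eref{LB}.

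The specialization $\beta \leq 1/(\sqrt{N}\log N)$ is immediate by substitution: $\beta|\log\beta|\sqrt{N} \leq (\log N + \log\log N)/\log N = O(1)$, so both bounds collapse to $e^{\pm Ck}$. I expect the main technical obstacle to be the uniform control of $\|\h^{\mu_\theta}\|$, since this requires input from the asymptotic behavior of $\mu_\theta$ across the entire intermediate temperature regime, especially in the tail region where $\mu_\theta$ has small but nontrivial mass outside $\supp(\mu_V)$; the optimization and the Cauchy--Schwarz computations are otherwise routine once the bound on $\|\varphi\|$ is in hand.
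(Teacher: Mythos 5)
Your overall architecture (Young/AM--GM plus \pref{fluct} for the upper bound, Jensen plus \pref{fluct} for the lower bound) is sound and matches the spirit of the paper's proof, but there is a genuine gap in the step where you claim $\|\nabla \h^{\mu_\theta}\|_{L^2}$ is uniformly bounded. The identity $\|\nabla \h^{\mu}\|_{L^2}^2 = 4\pi\,\mcl E(\mu)$ holds only for \emph{neutral} (zero-mass) measures. For a probability measure such as $\mu_\theta$, one has $\h^{\mu_\theta}(x) = -\log|x| + o(1)$ and $\nabla \h^{\mu_\theta}(x) \sim -x/|x|^2$ as $|x|\to\infty$, so in $\d=2$
\begin{equation*}
\int_{B_R} |\nabla \h^{\mu_\theta}|^2 \, dx \;=\; 4\pi\,\mcl E(\mu_\theta) \;+\; 2\pi \log R \;+\; o(1)\;\longrightarrow\;\infty ,
\end{equation*}
the extra $2\pi \log R$ coming from the boundary term $\oint_{\partial B_R} \h^{\mu_\theta}\,\partial_n \h^{\mu_\theta}\,ds$ that you have implicitly dropped. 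Thus $\h^{\mu_\theta} \notin H^1(\R^2)$ and \pref{fluct} cannot be applied to $\varphi = \h^{\mu_\theta}$ directly. This is not a minor technical obstacle but the central difficulty of the proof, which the paper flags explicitly at the outset of its argument: ``$\h^{\mu_\theta}$ is not in $H^1$ so we cannot use \pref{fluct} directly.''

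The paper resolves this with a cutoff: write $\h^{\mu_\theta} = \chi\,\h^{\mu_\theta} + (1-\chi)\h^{\mu_\theta}$ with $\chi$ a smooth cutoff equal to $1$ on $B_R$. The compactly supported piece $\chi\,\h^{\mu_\theta}$ has $\|\nabla(\chi\,\h^{\mu_\theta})\|_{L^2}^2 \lesssim \log R$ (not $O(1)$, but finite, and $R$ is a large fixed constant), to which \pref{fluct} applies and your Young/H\"older/Jensen manipulations then go through essentially as you wrote them. The tail piece $(1-\chi)\h^{\mu_\theta}$ is controlled not by energy estimates at all, but by confinement: for $R$ large enough one has $(1-\chi)|\h^{\mu_\theta}| \lesssim \zeta_V := \h^{\mu_V} + V - c_V$, and the exponential moments of $\sum_i \zeta_V(x_i)$ are controlled by the localization bound of \cite[Corollary 5.26]{S24}. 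Your proposal contains no analogue of this tail estimate, which is why the claimed $L^2$ bound is the place the argument breaks. (Your $L^\infty$ control of $\nabla\h^{\mu_\theta}$ is fine and matches the paper.)
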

	\begin{proof}
		The main difficulty is that $\h^{\mu_\theta}$ is not in $H^1$ so we cannot use \pref{fluct} directly, and so we proceed with a cutoff argument.

	\textbf{Step 1:} Analysis of the regularized potential. 
    
    Let $\chi$ be a cutoff function that is identically one on a ball of radius $R$ and vanishing outside of a ball of radius $R+1$. First, notice that by Young's inequality we have
		\begin{equation*}
			-\lambda \beta \Fluct\left[k\h^{\mu_\theta}\chi\right]^2-\frac{\beta}{4\lambda} \leq \beta \Fluct\left[k\h^{\mu_\theta}\chi\right] \leq \lambda \beta \Fluct\left[k\h^{\mu_\theta}\chi\right]^2+\frac{\beta}{4\lambda},
		\end{equation*}
        for any $\lambda > 0$.
		We set $\lambda=\frac{1}{2C\left\|k\h^{\mu_\theta}\chi\right\|^2\sqrt{N}}$. Then, via H\"older's inequality and Proposition \ref{p.fluct}
		\begin{align*}
			\E \exp (2\beta \Fluct[k\h^{\mu_\theta}\chi]) &\leq e^{C\beta \sqrt{N}\left\|k\h^{\mu_\theta}\chi\right\|^2}\E \exp \left(\frac{\beta}{C\left\|k\h^{\mu_\theta}\chi\right\|^2\sqrt{N}}\Fluct[k\h^{\mu_\theta}\chi]^2\right) \\
			&\leq \exp \left(C\beta \sqrt{N}\left\|k\h^{\mu_\theta}\chi\right\|^2+C\beta (\log \beta)\sqrt{N}\right). 
		\end{align*}
		A computation shows that 
		\begin{equation*}
			\left\|kh^{\mu_\theta}\chi\right\|^2\lesssim k \log R
		\end{equation*}
		and hence 
		\begin{equation*}
			\E \exp (2\beta \Fluct[k\h^{\mu_\theta}\chi]) \leq \exp \left(C\beta (\log \beta)k\sqrt{N}\right)
		\end{equation*}
		by taking $R$ large enough, to be determined later. For the lower bound, using Jensen's inequality and Proposition \ref{p.fluct},
		\begin{align*}
			\E \exp \left(2\beta \Fluct\left[k\h^{\mu_\theta}\chi\right]\right) &\geq \left(\E \exp \left(\frac{\beta}{C\left\|k\h^{\mu_\theta}\chi\right\|^2} \Fluct\left[k\h^{\mu_\theta}\chi\right]^2\right)\right)^{-2\lambda C\left\|k\h^{\mu_\theta}\chi\right\|^2}e^{-\beta/4\lambda} \\
			&\geq \exp\left(-C\lambda \beta (\log \beta)N\left|\|k\h^{\mu_\theta}\chi\right\|^2-\frac{\beta}{4\lambda}\right).
		\end{align*}
		Hence
		\begin{equation*}
			\E \exp \left(2\beta \Fluct\left[k\h^{\mu_\theta}\chi\right]\right) \geq \exp\left(-Ck\lambda \beta (\log \beta)N\log^2 R-\frac{\beta}{4\lambda}\right).
		\end{equation*}
		and taking $\lambda=\frac{1}{\sqrt{N}}$ we conclude that 
		\begin{equation*}
			\E \exp \left(2\beta \Fluct\left[k\h^{\mu_\theta}\chi\right]\right) \geq \exp\left(-Ck \beta (\log \beta)\sqrt{N}\right).
		\end{equation*}
		
	\textbf{Step 2:} Conclusion.

        Let 
        \begin{equation}
            \zeta_{V} := h^{\mu_{V}} + V - c_{V},
        \end{equation}
        where $c_{V}$ is as in equation \eqref{eq:ELeq}. Note that $\zeta_{V}$ is non negative, and vanishes in the support of $\mu_{V}$. Since $h^{\mu_{\theta}}$ grows logarithmically at infinity (see \cite{AS22}), by equation \eqref{e.potgrowth} there is $R$ big enough such that $(1-\chi)|\h^{\mu_\theta}| \lesssim \zeta_V$. 

         Using the average localization bound from \cite[Corollary 5.26]{S24}
		\begin{equation*}
			\log \left|\E \left(\exp \frac{\beta N}{2}\sum_{i=1}^{N-k}\zeta_V(x_i)\right)\right|\leq CN
		\end{equation*}
		in the regime $\beta \rightarrow 0$, we have 
		\begin{equation*}
			\E \exp \left(\beta k \sum_i \h^{\mu_\theta}(1-\chi)(x_i) \right)  \leq C \E \exp \left(\beta k\sum_{i=1}^{N}\zeta_V(x_i)\right) \leq \exp(Ck).
		\end{equation*}

        On the other hand, via Jensen's inequality,
		\begin{equation*}
			\E \exp \left(\beta k\sum_i \h^{\mu_\theta}(1-\chi)(x_i)\right) \geq C \E \exp \left(-\beta k\sum_{i=1}^{N}\zeta_V(x_i)\right)\geq\exp(-Ck).
		\end{equation*}
		Combining all of the above yields (\ref{e.UB}) and (\ref{e.LB})
	\end{proof}

    We now have everything we need to complete the proof of Theorem \ref{t.1}. The main computation is the following asymptotic description of the correlation functions.
	\begin{proposition}\label{p. corrfunctions}
    Assume \eref{growth} and \eref{temp.zero}. Let $\overline{z} \in \R^2$ and let $R_k$ the $k$-point function associated to the local point process $Q_{\overline{z},N}$. For any $Y_k = (y_1,\ldots,y_k) \in (\R^2)^k$, let $x_i = \overline{z} + N^{-1/2} y_i$. Then we have
		\begin{equation}
			\frac{R_k(Y_k)}{e^{-\beta k^2 \Fenergy_k(X_k,\mu_\theta)} \prod_{i=1}^k \mu_\theta(x_i)}=1+O\left(\beta N^{1/2} \log N \right)
		\end{equation}
        as $N \to \infty$ with $k$ fixed, uniformly in $Y_k$.
		In particular,
		\begin{equation*}
			\frac{R_k(Y_k)}{\prod_{i=1}^k \mu_\theta(x_i)}\rightarrow 1,
		\end{equation*}
		and $Q_{\overline{z},N}$ converges to a homogeneous Poisson point process of intensity $\mu_V(\overline{z})$ whenever $\overline{z}$ is in the interior of $\supp \mu_V$.
	\end{proposition}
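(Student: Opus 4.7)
The plan is to start from the explicit representation in \lref{kpoint}, which we can rewrite as
\begin{equation*}
R_k(Y_k) = \frac{N!}{(N-k)! N^k}\cdot\frac{\Kpart_{N-k,\beta}}{\Kpart_{N,\beta}}\cdot e^{-\beta k^2 \Fenergy_k(X_k,\mu_\theta)}\prod_{i=1}^k \mu_\theta(x_i)\cdot \E_{N-k}\!\left[F\, e^{-\beta(N-k)\sum_{i=1}^k \h^{\fluct_{N-k}}(x_i)}\right],
\end{equation*}
where $F(X_{N-k}) := e^{\beta k(N-k)\int \h^{\mu_\theta}\,d\fluct_{N-k}}$ is $Y_k$-independent. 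The combinatorial prefactor is $1 + O(k^2/N)$ for $k$ fixed, and all $Y_k$-dependence of the expectation sits in the factor $e^{-\beta(N-k)\sum_i \h^{\fluct_{N-k}}(x_i)}$, which is precisely of the form controlled by \cref{exph}.

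The key step is to apply \cref{exph} (to the $(N-k)$-particle gas) with this choice of $F$. Its $L^2$ norm $\|F\|_{L^2(\P_{N-k,\beta})}\leq e^{Ck}$ follows from \pref{lower bound} applied with $k$ replaced by $2k$, while a matching lower bound $\E_{N-k}[F]\geq e^{-Ck}$ is obtained from Cauchy--Schwarz in the form $\E[F]\E[F^{-1}]\geq 1$ together with an analogous upper bound on $\E_{N-k}[F^{-1}]$. We obtain
\begin{equation*}
\E_{N-k}\!\left[F\, e^{-\beta(N-k)\sum_{i=1}^k \h^{\fluct_{N-k}}(x_i)}\right] = \E_{N-k}[F]\bigl(1 + O(\beta N^{1/2}\log N)\bigr)
\end{equation*}
with error uniform in $Y_k$, which upon substitution yields
\begin{equation*}
R_k(Y_k) = A_{N,k,\beta}\cdot e^{-\beta k^2 \Fenergy_k(X_k,\mu_\theta)}\prod_{i=1}^k \mu_\theta(x_i)\bigl(1 + O(\beta N^{1/2}\log N)\bigr)
\end{equation*}
for a $Y_k$-independent constant $A_{N,k,\beta}$. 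We pin down $A_{N,k,\beta}$ by integrating: from $\int\rho_k\,dX_k = 1$ one has $\int R_k(Y_k)\,dY_k = \frac{N!}{(N-k)!}$, so changing variables $dY_k = N^k\, dX_k$ and using uniformity of the error gives
\begin{equation*}
A_{N,k,\beta} = \frac{N!/[(N-k)!N^k]}{\int e^{-\beta k^2 \Fenergy_k(X_k,\mu_\theta)}\prod_{i=1}^k \mu_\theta(x_i)\,dX_k}\bigl(1 + O(\beta N^{1/2}\log N)\bigr).
\end{equation*}
A direct estimate using $|e^u - 1|\leq |u|e^{|u|}$, Cauchy--Schwarz, and integrability of exponential moments of $\Fenergy_k$ against $\mu_\theta^{\otimes k}$ (which follows from $\int|x-y|^{-\alpha}\mu_\theta(x)\mu_\theta(y)\,dx\,dy<\infty$ for small $\alpha>0$) shows the integral in the denominator is $1 + O(\beta)$. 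Together with $N!/[(N-k)!N^k]=1+O(1/N)$ and the fact that $1/N = O(\beta)$ under \eref{conf} (since $\theta_\ast = \inf_N N\beta_N > 0$), this delivers the main estimate.

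The Poisson convergence follows quickly: $\mu_\theta(x_i)\to \mu_V(\bar z)$ in the interior of $\supp\mu_V$, and a short computation with $x_i = \bar z + N^{-1/2}y_i$ shows $\beta k^2\Fenergy_k(X_k,\mu_\theta) = \frac{\beta k(k-1)}{4}\log N + o(1) = o(1)$ under \eref{temp.zero}; hence $R_k(Y_k)\to\mu_V(\bar z)^k$ pointwise, and the uniform approximation just established verifies the tightness hypothesis in \pref{PP convergence}. The main technical obstacle is maintaining uniformity of the \cref{exph} error in $Y_k\in\R^{2k}$ when integrating to pin down $A_{N,k,\beta}$: the corollary provides the needed uniformity on compact sets of configurations, and the confinement estimate \eref{strong confinement} makes the contribution from configurations with some $|y_i|$ large negligible, so one restricts to a compact region in $Y_k$ and bounds the tail separately.
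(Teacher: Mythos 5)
Your proposal is correct and follows essentially the same route as the paper: identical starting representation from \lref{kpoint}, identical application of \cref{exph} with the same choice of $F(X_{N-k})=e^{\beta k(N-k)\int \h^{\mu_\theta}\,d\fluct_{N-k}}$, and the same auxiliary input from \pref{lower bound} to control $\|F\|_{L^2}$ and its reciprocal moment. The only cosmetic difference is that you determine the $Y_k$-independent prefactor $A_{N,k,\beta}$ via the normalization $\int \rho_k\,dX_k = 1$, whereas the paper computes $\Kpart_{N,\beta}/\Kpart_{N-k,\beta}$ directly via Tonelli; both amount to the same estimate on $\int e^{-\beta k^2 \Fenergy_k}\mu_\theta^{\otimes k}$, and you correctly flag the attendant uniformity-in-$Y_k$ issue that the paper handles more implicitly.
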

	\begin{proof}
    First, by equation \eref{kpointmarg},
\begin{equation*}
        \begin{split}
        &\frac{\rho_k(x_1,\dots,x_k)}{\prod_{i=1}^k \mu_{\theta}(x_i)} \\
            = &\frac{\Kpart_{N-k,\beta}}{\Kpart_{N,\beta}} e^{-\beta k^2 \Fenergy_k(X_k, \mu_\theta)}  \E_{N-k} \left[ e^{-\beta(N-k) \sum_{i=1}^k \h^{\fluct_{N-k}}(x_i) + \beta k(N-k) \int \h^{\mu_\theta}(x) ~\fluct_{N-k}(dx)} \right]
            \end{split}
		\end{equation*}
Applying \cref{exph} for $N-k$ with 
\begin{equation*}
F(X_{N-k})=e^{\beta k(N-k)\int \h^{\mu_\theta}(x)~\fluct_{N-k}(dx)}
\end{equation*}
we obtain
		\begin{multline*}
			\frac{\rho_k(X_k)}{\prod_{i=1}^k \mu_\theta(x_i)} = e^{-\beta k^2 \Fenergy_k(X_k,\mu_\theta)} \frac{\Kpart_{N-k,\beta}}{\Kpart_{N,\beta}}\times\\
			\( \E_{N-k} \left[ e^{\beta k (N-k) \int \h^{\mu_\theta}(y) ~\fluct_{N-k}(dy)} \right] + O(\beta N^{1/2} \log N)\|F\|_{L^2}\).
		\end{multline*}
        Utilizing Proposition \ref{p.lower bound} allows us to control $\|F\|_{L^2}$ uniformly in the regime $\beta \leq \frac{1}{\sqrt{N}\log N}$, which is guaranteed by the assumption (\ref{e.temp.zero}). 
        
        Next, we find by \lref{kpoint} and Tonelli's theorem that
		\begin{equation}
        \begin{split}
			&\frac{\Kpart_{N,\beta}}{\Kpart_{N-k,\beta}}\\
            =& \int \E_{N-k} \left[ e^{-\beta N^2 \Fenergy(X_k \cup X_{N-k}, \mu_\theta) + \beta (N-k)^2 \Fenergy (X_{N-k}, \mu_\theta)}\right] \mu_{\theta}^{\otimes k}(dX_k) \\
             = & \int e^{-\beta k^2 \Fenergy_k(X_k,\mu_\theta)} \E_{N-k} \left[ e^{-\beta(N-k) \sum_{i=1}^k \h^{\emp - \mu_\theta}(y_i) + \beta k(N-k) \int \h^{\mu_{\theta}}(y) \fluct_{N-k}(dy)} \right] \mu_\theta^{\otimes k}(dX_k).
             \end{split}
		\end{equation}
		We apply \cref{exph} and Proposition \ref{p.lower bound} once again and find
		\begin{equation} \label{e.postexph} 
        \begin{split}
        &\lefteqn{\frac{\rho_k(Y_k)}{\prod_{i=1}^k \mu_\theta(x_i)}}  \\ 
        = &\frac{e^{-\beta k^2 \Fenergy_k(X_k,\mu_\theta)} \( \E_{N-k} \left[ e^{\beta k (N-k) \int \h^{\mu_\theta}(y) \fluct_{N-k}(dy)} \right] + O(\beta N^{1/2} \log N) \)}{\int e^{-\beta k^2 \Fenergy_k(X_k',\mu_\theta)} \( \E_{N-k} \left[ e^{\beta k (N-k) \int \h^{\mu_\theta}(y)\fluct_{N-k}(dy)} \right] + O(\beta N^{1/2} \log N)  \) \mu_{\theta}^{\otimes k} (dX'_k)}.
        \end{split}
		\end{equation}
        Note that the error term in the denominator has an implicit constant independent of $X'_k$ by Proposition \ref{p.lower bound}. It is straightforward to see
        $$
        \int e^{-\beta k^2 \Fenergy_k(X_k',\mu_\theta)} \mu_{\theta}^{\otimes k} (dX'_k) = \exp\(O(\beta)\) = (1 + O(\beta))
        $$
        and so applying \pref{lower bound} we can see that \eref{postexph} is equal to
        $$
         (1+ O(\beta))e^{-\beta k^2 \Fenergy_k(X_k,\mu_\theta)} \frac{L + O(\beta N^{1/2} \log N)}{L + O(\beta N^{1/2} \log N)}
        $$
        for
        $$
        L = \E_{N-k} \left[ e^{\beta k (N-k) \int \h^{\mu_\theta}(y)\fluct_{N-k}(dy)} \right] = e^{O(1)},
        $$
        where the last bound follows from \pref{lower bound}. A simple Taylor approximation allows us to estimate
        \begin{align*}
        \frac{\rho_k(Y_k)}{\prod_{i=1}^k \mu_\theta(x_i)} &= e^{-\beta k^2 \Fenergy_k(X_k,\mu_\theta)} (1+ O(\beta)) (1 + O(\beta N^{1/2} \log N)) \\ &= e^{-\beta k^2 \Fenergy_k(X_k,\mu_\theta)} (1 + O(\beta N^{1/2} \log N)).
        \end{align*}
        Finally, we note that $\beta k^2 \Fenergy_k(X_k,\mu_\theta) = O(\beta \log N)$ for fixed $Y_k$ and that $\frac{(N-k)!N^k}{N!} = 1+O(1/N)$, so the same asymptotic holds for $R_k(Y_k)$ as $\rho_k(X_k)$. Since $\mu_\theta(y_i)\rightarrow \mu_V(\ov z)$ by (\ref{e. inttemp}) (see \cite[Theorem 1]{AS22}), we have the desired convergence of the correlation functions.
		
		Finally, the above computation coupled with the convergence $\mu_\theta(y_i)\rightarrow \mu_V(\ov z)$ (\cite[Theorem 1]{AS22}) tells us that there exists some $C>0$ such that 
		\begin{equation*}
			|R_k^N(y_1,\dots,y_k)|\leq (C\mu_V(\ov z))^k
		\end{equation*}
		for all $N$, and hence for any compact $\Omega \subset \R^2$
		\begin{equation*}
			\sup_{N \in \mathbb{N}}\sum_{k=1}^\infty \frac{1}{k!}\int_\Omega R_k^N(dy_1,\dots,dy_k) \leq \sum_{k=1}^\infty \frac{1}{k!}(C\mu_V(\ov z))^k|\Omega|<+\infty.
		\end{equation*}
		Convergence to a homogeneous Poisson point process of intensity $\mu_V(\ov z)$ then follows from Proposition \ref{p.PP convergence}.	\end{proof}

	\bibliographystyle{amsalpha}
	\bibliography{bibliography.bib}

\end{document}